\numberwithin{equation}{section}
\newtheorem{thm}{Theorem}[section]
\newtheorem{lem}[thm]{Lemma}
\newtheorem{prop}[thm]{Proposition}
\newtheorem{cor}[thm]{Corollary}
\theoremstyle{definition}
\newtheorem{rem}[thm]{Remark}
\newcommand{\U}{\mathcal{U}}
\newcommand{\Q}{\mathbb{Q}}
\newcommand{\R}{\mathbb{R}}
\newcommand{\C}{\mathbb{C}}
\newcommand{\Z}{\mathbb{Z}}
\newcommand{\T}{\mathbb{T}}
\newcommand{\id}{\mathsf{id}}
\newcommand{\mc}{\mathcal}
\newcommand{\mf}{\mathfrak}
\newcommand{\inner}[1]{\left<\smash[t]{#1}\right>}
\newcommand{\de}{\mathrm{d}}
\newcommand{\lbrak}{[\hspace{-0.5pt}[}
\newcommand{\rbrak}{]\hspace{-0.5pt}]}
\newcommand{\ga}{\mf{p}}
\newcommand{\gb}{\mf{q}}
\newcommand{\gc}{\mf{t}}
\renewcommand{\hbar}{\nu}
\begin{document}

\allowdisplaybreaks

\newcommand{\arXivNumber}{1509.05347}

\renewcommand{\PaperNumber}{015}

\FirstPageHeading

\ShortArticleName{Non-Associative Geometry of Quantum Tori}
\ArticleName{Non-Associative Geometry of Quantum Tori}

\Author{Francesco D'ANDREA~$^{\dag\ddag}$ and Davide FRANCO~$^\ddag$}
\AuthorNameForHeading{F.~D'Andrea and D.~Franco}

\Address{$^\dag$~Dipartimento di Matematica e Applicazioni, Universit\`a di Napoli ``Federico II'',\\
 \hphantom{$^\dag$}~Complesso MSA, Via Cintia, 80126 Napoli, Italy}
\Address{$^\ddag$~I.N.F.N.~Sezione di Napoli, Complesso MSA, Via Cintia, 80126 Napoli, Italy}
\Email{\href{mailto:francesco.dandrea@unina.it}{francesco.dandrea@unina.it}, \href{mailto:dfranco@unina.it}{dfranco@unina.it}}

\ArticleDates{Received October 02, 2015, in f\/inal form February 04, 2016; Published online February 07, 2016}

\Abstract{We describe how to obtain the imprimitivity bimodules of the noncommutative torus from a ``principal bundle'' construction, where the total space is a quasi-associative deformation of a $3$-dimensional Heisenberg manifold.}

\Keywords{noncommutative torus; quasi-Hopf algebras; cochain quantization}

\Classification{58B34; 46L87; 53D55}

\section{Introduction}

In dif\/ferential geometry a standard way to construct vector bundles is from a principal bundle and a representation of the structure group. This construction works in noncommutative geometry too \cite{Con94,GVF01,Lan02}, with spaces replaced by algebras, vector bundles replaced by f\/initely generated projective modules, structure groups replaced by Hopf algebras or compact quantum groups, and principal bundles replaced by algebra extensions with suitable additional properties (see, e.g.,~\cite{BM92,Haj96}). When the structure group is~$U(1)$, it is possible to reconstruct the total space of the ``bundle'' (more precisely, a strongly graded $C^*$-algebra) from the base space and a~noncommutative ``line bundle'' (a~self-Morita equivalence bimodule), cf.~\cite{AKL14} (see also~\cite{ADL15,D15}).

A case study is provided by the $C^*$-algebra of the noncommutative torus $A_\theta$, $\theta\in\R{\setminus}\Q$ \cite{Con80,Rie81}.
The group ${\rm SL}_2(\Z)$ acts on $\R{\setminus}\Q$ by fractional linear transformations,
\begin{gather}\label{eq:gtheta}
g\theta :=\frac{a\theta+b}{c\theta+d}
 ,\qquad\forall\,\theta\in\R{\setminus}\Q
\qquad\text{and}\qquad g=
 \begin{pmatrix} a & b \\ c & d \end{pmatrix}
\in {\rm SL}_2(\Z) ,
\end{gather}
and for every $g$ as above there is a full right Hilbert $A_\theta$-module $E_g(\theta)$ which is a Morita equiva\-len\-ce bimodule between $A_\theta$ and the algebra $\operatorname{End}_{A_\theta}(E_g(\theta))\simeq A_{g\theta}$ \cite{Con80,Rie83}.
The equivalence class of the right module~$E_g(\theta)$ only depends on the `degree' and `rank' of the module, def\/ined by
$\deg (E_g(\theta))=c$ and $\operatorname{rank}(E_g(\theta))=c\theta+d$ (see, e.g.,~\cite{PS02}). Every other f\/initely generated projective right $A_\theta$-module is a direct sum of such modules.

In this paper we show how one can formally derive some of these bimodules~-- those with rank~$1$ for~$\theta\to 0$ (hence, line bundles)~-- from some kind of $U(1)$-bundle, but at the price of working with quasi-associative algebras, i.e., monoids in the category of representations of a~quasi-Hopf algebra.
Several well-known properties of these bimodules can be shown to be a~consequence of the explicit form of the coassociator (cf.~Lemma~\ref{gcl}), including the associativity property of the pairing of bimodules of Polishchuk and Schwarz~\cite{PS02}.

The plan of the paper is the following. In Section~\ref{sec:2} we recall some basic def\/initions about twist deformations of quasi-Hopf algebras and algebra modules. In Section~\ref{sec:3} we will introduce the twist we are interested in, based on the universal enveloping algebra of the $3$-dimensional Heisenberg Lie algebra $\mf{h}_3(\R)$; we will also prove some generalized associativity property of twisted module algebras, namely Proposition~\ref{prop:gcl}. In Section~\ref{sec:4} we will give a concrete realization of $\mf{h}_3(\R)$ in terms of dif\/ferential operators on the total space of a principal $U(1)$-bundle $M_3\to\T^2$ on the $2$-torus, and apply to it the deformation recipe of Section~\ref{sec:2} with the twist introduced in Section~\ref{sec:3}: the total space becomes a quasi-associative $\Z$-graded algebra, which in degree $0$ is the algebra of smooth functions on the noncommutative torus; in degree $n\neq 0$ we get bimodules that we compare with Connes--Rief\/fel imprimitivity bimodules.
In Section~\ref{sec:5} we give a slightly dif\/ferent version of the construction, more natural if one is interested in complex structures: we show that, in this case, the twisted algebra of functions on the Heisenberg manifold has an associative commutative subalgebra given by ordinary theta functions.
In Section~\ref{sec:6} we study vector bundles of any rank.

{\bf Notations.} By a \emph{algebra} we shall always mean a unital algebra (not necessarily associative nor commutative) over a commutative unital ring $R$; the algebraic tensor product over $R$ will be denoted by $\otimes_R$, or simply $\otimes$ if there is no risk of confusion;
by a Hopf algebra over $\C\lbrak\hbar\rbrak$ we shall always mean a \emph{topological} Hopf algebra, completed in the $h$-adic topology, and by $\otimes_{\C\lbrak\hbar\rbrak}$ the completed tensor product \cite[Section~4.1A]{CP94}.

\section{Mathematical preliminaries}\label{sec:2}

\begin{figure}[t]
 \centering
 \subfloat[Pentagon diagram]{
\begin{small}
\begin{tikzpicture}[scale=0.85]
\tikzset{>=stealth}

\node (P0) at (90:2.8cm) {$((A\otimes B)\otimes C)\otimes D$};
\node (P1) at (90+72:2.5cm) {$(A\otimes B)\otimes (C\otimes D)$};
\node (P2) at (90+2*72:2.5cm) {$\mathllap{A\otimes (B\,\otimes\,}(C\otimes D))$};
\node (P3) at (90+3*72:2.5cm) {$A\otimes ((B \mathrlap{\,\otimes\,C)\otimes D)}$} ;
\node (P4) at (90+4*72:2.5cm) {$(A\otimes (B\otimes C))\otimes D$};

\draw
(P0) edge[->] node[left=1pt] {\raisebox{8pt}{\footnotesize $\Phi_{A\otimes B,C,D}$}} (P1)
(P1) edge[->] node[left=1pt] {{\footnotesize $\Phi_{A,B,C\otimes D}$}} (P2)
(P3) edge[->] node[above] {{\footnotesize $\Phi_{B,C,D}$}} (P2)
(P4) edge[->] node[right=1pt] {{\footnotesize $\Phi_{A,B\otimes C,D}$}} (P3)
(P0) edge[->] node[right=1pt] {\raisebox{8pt}{\footnotesize $\Phi_{A,B,C}$}} (P4);
\end{tikzpicture}
\end{small}
 \label{fig:P} }
	\hspace{3mm}
 \subfloat[Triangle diagram]{
\begin{small}
\begin{tikzpicture}
\tikzset{>=stealth}

\node (P0) at (0,0) {$(A\otimes I)\otimes B$};
\node (P1) at (3.6,0) {$A\otimes (I\otimes B)$};
\node (P2) at (1.8,-2) {$A\otimes B$};
\node (P3) at (1.8,-3) {};

\draw
(P0) edge[->] node[above] {{\footnotesize $\Phi_{A,I,C}$}} (P1)
(P0) edge[->] node[below left]{\raisebox{8pt}{\footnotesize $\rho_A\otimes\id_B$}} (P2)
(P1) edge[->] node[below right]{\raisebox{8pt}{\footnotesize $\id_A\otimes\lambda_B$}} (P2);
\end{tikzpicture}
\end{small}
 \label{fig:T} }
	\vspace{-10pt}\caption{}\vspace{-5pt}
\end{figure}

In this section, we recall some def\/initions and properties of quasi-Hopf algebras and Drinfeld twists, from~\cite{CP94,Dri90,Dri91,ML98}.

\subsection{Monoidal categories}\label{sec:2.1}

A \emph{monoidal category} is a category $\mathbf{C}$ equipped with a functor \mbox{$\otimes\colon \mathbf{C}\times\mathbf{C}\to\mathbf{C}$} which, modulo natural isomorphisms, is associative and unital \cite[Section~5.1]{CP94}. More precisely, there is an object~$I$ and three natural isomorphisms~-- $\Phi$ between the functors $(\,\rule{5pt}{0.5pt}\,\otimes\,\rule{5pt}{0.5pt}\,)\otimes\,\rule{5pt}{0.5pt} $~and~$ \rule{5pt}{0.5pt}\,\otimes (\,\rule{5pt}{0.5pt}\,\otimes\,\rule{5pt}{0.5pt}\,)$, $\lambda$~between~$I\otimes\,\rule{5pt}{0.5pt}$~and the identity, $\rho$~between~$\rule{5pt}{0.5pt}\,\otimes I$ and the identity~-- such that the diagrams~\ref{fig:P} and~\ref{fig:T} commute for all objects~$A$, $B$, $C$, $D$.
Examples are the category of modules over a~f\/ield (vector spaces), over a group or a Hopf algebra (representations), over a topological Hopf algebra (with completed tensor product).

\begin{figure}[t]
 \centering
 \subfloat[Associativity]{
\begin{small}
\begin{tikzpicture}[scale=0.7]
\tikzset{>=stealth}

\node (P0) at (-90:2.5cm) {$A$};
\node (P1) at (-90+72:2.5cm) {$A\otimes A$};
\node (P2) at (-90+2*72:2.5cm) {$A\otimes \mathrlap{(A\otimes A)}$};
\node (P3) at (-90+3*72:2.5cm) {$\mathllap{(A\otimes A)}\otimes A$} ;
\node (P4) at (-90+4*72:2.5cm) {$A\otimes A$};

\draw
(P1) edge[->] node[below right] {{\footnotesize $m$}} (P0)
(P2) edge[->] node[right=1pt] {{\footnotesize $\id\otimes m$}} (P1)
(P3) edge[->] node[above] {{\footnotesize $\Phi_{A,A,A}$}} (P2)
(P3) edge[->] node[left=1pt] {{\footnotesize $m\otimes\id$}} (P4)
(P4) edge[->] node[below left] {{\footnotesize $m$}} (P0);
\end{tikzpicture}
\end{small}
 \label{fig:monA} }
	\hspace{1.2cm}
 \subfloat[Unitality]{
\begin{small}
\begin{tikzpicture}
\tikzset{>=stealth}

\node (P0) at (0,0) {$I\otimes A$};
\node (P1) at (2,0) {$A\otimes A$};
\node (P2) at (4,0) {$A\otimes I$};
\node (P3) at (2,-2) {$A$};
\node (P4) at (1.8,-2.8) {};

\draw
(P0) edge[->] node[above] {{\footnotesize $\eta\otimes\id$}} (P1)
(P2) edge[->] node[above] {{\footnotesize $\id\otimes\eta$}} (P1)
(P0) edge[->] node[left] {{\footnotesize $\lambda_A$}} (P3)
(P1) edge[->] node[right] {{\footnotesize $m$}} (P3)
(P2) edge[->] node[right=1pt] {\raisebox{-7pt}{\footnotesize $\rho_A$}} (P3);
\end{tikzpicture}
\end{small}
 \label{fig:monB} }
	\vspace{-10pt}\caption{}\vspace{-5pt}
\end{figure}

A \emph{monoid} \cite[Section~VII.3]{ML98} in a monoidal category is an object $A$ together with two arrows \mbox{$m\colon A\otimes A\to A$} and $\eta\colon I\to A$, the ``multiplication'' and ``unit'',
satisfying the usual axioms of a~unital associative algebra modulo natural transformations, namely the diagrams~\ref{fig:monA} and~\ref{fig:monB} must commute. Monoids in the category of vector spaces are associative algebras, in the category of representations of a~Hopf algebra~$H$ are $H$-module algebras.

\subsection{Quasi-Hopf algebras}\label{sec:2.2}

Let $H$ be an algebra and $\Delta\colon H\to H\otimes H$ a linear map.
For all $n\geq m$ and all $1\leq i_1<i_2<\cdots<i_m\leq n$, we will denote by $h\mapsto h_{i_1\ldots i_m}$ the
linear map $H^{\otimes m}\to H^{\otimes n}$ def\/ined on homogeneous tensors
$h=a_1\otimes a_2 \otimes\cdots\otimes a_m$ as follows: we put $a_k$ in the leg
$i_k$ of the tensor product for all $k=1,\ldots, m$, and f\/ill the additional
$n-m$ legs with $1$. The subscript $(i_ki_{k+1})$ in parenthesis means
that we apply $\Delta$ to $a_k$ and put the f\/irst leg of $\Delta(a_k)$ in position~$i_k$ and the second in position $i_{k+1}$.
So, for example, if $h=a\otimes b$, $h_{13}=a\otimes 1\otimes b$,
$h_{(12)3}=\Delta(a)\otimes b$, etc.

A \emph{quasi-bialgebra} (over a commutative ring $R$) is an associative algebra $H$ together with two homomorphisms $\Delta\colon H\to H\otimes H$ and $\epsilon\colon H\to R$ (the ``coproduct'' and ``counit'') and an invertible element $\Phi\in H\otimes H\otimes H$ (the ``coassociator'') satisfying the following conditions (here we follow the notations of \cite{Dri90,Dri91}):
\begin{gather}
\Phi_{12(34)}\Phi_{(12)34} = \Phi_{234}\Phi_{1(23)4}\Phi_{123} ,\label{eq:pent} \\
(\id\otimes\epsilon\otimes\id )(\Phi)=1 ,\label{eq:counitality}
\end{gather}
and
\begin{gather}
(\id\otimes\Delta)\Delta(h) =\Phi
(\Delta\otimes\id)\Delta(h)\Phi^{-1} , \label{eq:quasicoass} \\
(\id\otimes\epsilon)\Delta(h) =h=(\epsilon\otimes\id)\Delta(h) ,\nonumber
\end{gather}
for all $h\in H$.
A \emph{quasi-Hopf algebra} is a quasi-bialgebra satisfying an additional condition which for $\Phi=1$ reduces to the existence and bijectivity of an antipode, see, e.g., \cite[Def\/inition~16.1.1]{CP94}.

Any Hopf algebra is a quasi-Hopf algebra with trivial coassociator, $\Phi=1$. In fact, in the Hopf case any $\Phi$ which is \emph{invariant} (i.e., commutes with the image of the iterated coproduct) does the job. If $\Phi$ is invariant,
\eqref{eq:quasicoass} reduces to the coassociativity condition $(\id\otimes\Delta)\Delta=(\Delta\otimes\id)\Delta$ and we get the usual def\/inition of bialgebra/Hopf algebra.

If $H$ is a commutative Hopf algebra, \eqref{eq:pent} can be interpreted as a $3$-cocycle condition in the Hopf algebra cohomology of $H$ \cite[Section~2.3]{Maj95}. In general, if $A$, $B$, $C$ are three $H$-modules, we may think of $\Phi$ as a module map $(A\otimes B)\otimes C\to A\otimes (B\otimes C)$, with the natural isomorphism of vector spaces understood. Condition \eqref{eq:pent} becomes the associativity condition~\ref{fig:P} for the category of $H$-modules, which is then monoidal with unit object $I=R$ given by the ground ring
(with module structure being given by the counit, so that~\eqref{eq:counitality} implies the commutativity of the diagram~\ref{fig:T}, and module structure on a tensor product def\/ined by the coproduct).

A monoid $A$ in this category is a $H$-module algebra, that is an algebra~$A$ with multiplication satisfying the quasi-associativity condition (cf.~diagram~\ref{fig:monA}):
\begin{gather}\label{eq:quasiassoc}
m(m\otimes\id)=m(\id\otimes m)\Phi ,
\end{gather}
Commutativity of the diagram~\ref{fig:monB} means that~$A$ is unital, with $1_A:=\eta(1_R)$. The condition that~$m$ and~$\eta$ are morphisms in the category, i.e., $H$-module maps, gives $h.1_A=\epsilon(h)1_A$ and
\begin{gather*}
h\circ m=m\circ \Delta(h) ,
\end{gather*}
for all $h\in H$. Equation~\eqref{eq:quasiassoc} becomes the usual associativity condition if the action of $\Phi$ on $A\otimes A\otimes A$ is trivial (a~suf\/f\/icient condition of course is that $\Phi=1$).

The above def\/initions remain valid if the ground ring is $R=\C\lbrak\hbar\rbrak$, with $\otimes_R$ the completed tensor product.

The advantage of quasi-bialgebras (resp.~quasi-Hopf algebras) over ordinary bialgebras (resp.\ Hopf algebras) is that there is a~``gauge action'' of the group of invertible elements in $H\otimes H$ that doesn't change the category of modules.

Let $H$ be a quasi-bialgebra (resp.~quasi-Hopf algebra) and $F\in H\otimes H$ an invertible element satisfying $(\id\otimes\epsilon)(F)=(\epsilon\otimes\id)(F)=1$. One can def\/ine a new quasi-bialgebra (resp.~quasi-Hopf algebra) $H_F$ given by $H$ as an algebra and with the same counit, but with a new coproduct $\Delta_F$ and coassociator $\Phi_F$ def\/ined by
\begin{gather*}
 \Delta_F(h) :=F\Delta(h)F^{-1},\qquad \forall\, h\in H,\\
 \Phi_F :=F_{23}F_{1(23)} \Phi
\big(F^{-1}\big)_{(12)3}\big(F^{-1}\big)_{12} .
\end{gather*}
It turns out that the categories of $H$-modules and $H_F$-modules are equivalent as monoidal categories.
In particular, if $(A,m)$ is a $H$-module algebra, there is a $H_F$-module algebra $(A,m_F)$ given by~$A$ as a vector space, with the same unit element, and with multiplication:
\begin{gather}\label{eq:mf}
m_F:=m\circ F^{-1} .
\end{gather}
We will refer to $H_F$ as a twist deformation of~$H$, and to~$F$ as a \emph{twisting element} based on~$H$.

\subsection{Quantum universal enveloping algebras}\label{sec:QUEA}

By a \emph{deformation} of a quasi-bialgebra over a f\/ield $\Bbbk$ we mean a topological quasi-bialgebra $H_\hbar$ over $\Bbbk\lbrak\hbar\rbrak$
such that $H_\hbar/\hbar H_\hbar\simeq H$ as quasi-bialgebras and $H_\hbar\simeq H\lbrak\hbar\rbrak$ as $\Bbbk\lbrak\hbar\rbrak$-modules.
If~$H$ is a quasi-Hopf algebra, any deformation as a quasi-bialgebra is in fact a quasi-Hopf algebra \cite[Section~16.1C]{CP94}.

If $H=\U(\mf{g})$ is the universal enveloping algebra of a Lie algebra $\mf{g}$, with standard Hopf algebra structure and trivial coassociator, a deformation $H_\hbar$ with coassociator \mbox{$\Phi_\hbar\equiv 1$ mod $\hbar^2$} will be called a \emph{quantum universal enveloping algebra} (or~QUEA). Any twist of $\U(\mf{g})$ by a twisting element $F_\hbar$ based on $\U(\mf{g})\lbrak\hbar\rbrak$
(and satisfying \mbox{$F_\hbar\equiv 1$ mod $\hbar$}) is a~QUEA, and roughly speaking every QUEA arises in this way (see Theorem~16.1.11 of~\cite{CP94} for the precise statement).

\section{A twist based on the Heisenberg Lie algebra}\label{sec:3}

The example which is of interest to us is based on the Lie algebra $\mf{h}_3(\R)$ of the $3$-dimensional Heisenberg group, or more precisely on the Hopf algebra
$\U(\mf{h}_3(\R))$.

The Lie algebra $\mf{h}_3(\R)$ has a basis of three elements $\ga$, $\gb$, $\gc$, with $\gc$ central and
\begin{gather}\label{eq:h3R}
[\ga,\gb]=\gc .
\end{gather}
Let us call \emph{parameter space}
$\Theta\subset\C\lbrak\hbar\rbrak$ the ideal:
\begin{gather*}
\Theta:=\hbar \C\lbrak\hbar\rbrak .
\end{gather*}
Note that for all $\{a_n\}_{n\geq 0}$ belonging to a complex vector space $A$ and for all $\theta\in\Theta$, the expression $\sum_{n\geq 0}a_n\theta^n$ is a well def\/ined element of $A\lbrak\hbar\rbrak$ (for each $N\geq 0$, the coef\/f\/icient of $\hbar^N$ is a f\/inite linear combination
of elements of~$A$).

There is an action $\alpha$ of $\Z$ by automorphisms on the vector space $\Theta$ given by{\samepage
\begin{gather}\label{eq:nact}
\alpha_k(\theta)=\frac{\theta}{1+k\theta}=
\theta-k\theta^2+\cdots
 ,\qquad k\in\Z,\quad \theta\in\Theta.
\end{gather}
Clearly $\alpha$ maps $\Theta$ into itself, and $\alpha_j\alpha_k=\alpha_{j+k}$ $\forall\, j,k\in\Z$.}

For all $\theta\in\Theta$, a twisting element based on $\U(\mf{h}_3(\R))\lbrak \hbar\rbrak$ is
\begin{gather*}
F_\theta=\exp\big\{\theta\hspace{1pt}\ga\hspace{1pt}\otimes\gb\big\} .
\end{gather*}
Let $A$ be any $\U(\mf{h}_3(\R))$-module algebra (associative, since the coassociator is trivial), and for all $n\in\Z$ def\/ine
\begin{gather}\label{eq:An}
A_n:=\big\{ a\in A\colon \gc.a=na \big\} .
\end{gather}
Since $\gc$ is central, each $A_n$ is a $\U(\mf{h}_3(\R))$-module itself, and $A_\bullet:=\bigoplus_{n\in\Z}A_n$ is a graded (associative) module subalgebra of~$A$.

We let $a\ast_\theta b:=m\circ F_\theta^{-1}(a\otimes b)$ be the product on $A\lbrak\hbar\rbrak$ def\/ined in \eqref{eq:mf}, and denote by $A^\theta=(A\lbrak\hbar\rbrak,\ast_\theta)$ the deformed algebra.
Since~$\gc$ is central, each set $A_n\lbrak\hbar\rbrak$ is stable under the action of~$\ga$,~$\gb$. So
$A^\theta_\bullet=(A_\bullet\lbrak\hbar\rbrak,\ast_\theta)$ is a $\Z$-graded subalgebra of $A^\theta$, and $A^\theta_0=(A_0\lbrak\hbar\rbrak,\ast_\theta)$ is a~yet smaller subalgebra.

\begin{lem}\label{gcl}
For all $\theta,\theta'\in\Theta$:
\begin{gather}\label{eq:gcl}
(\Delta\otimes\id)\big(F_\theta^{-1}\big)\big(F_{\theta'}^{-1}\otimes 1\big)
=(\id\otimes\Delta)\big(F_{\theta'}^{-1}\big)\big(1\otimes F_\theta^{-1}\big)\Phi_{\theta,\theta'},
\end{gather}
where
\begin{gather}\label{eq:Phittp}
\Phi_{\theta,\theta'}:=\exp\big\{{-}\ga\otimes\left(\theta-\theta'-\theta\theta' \gc\right)\otimes\gb\big\} .
\end{gather}
\end{lem}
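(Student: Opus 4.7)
The plan is to convert (\ref{eq:gcl}) into an identity of exponentials in $\U(\mf{h}_3(\R))^{\otimes 3}$ (everything converging in the $\hbar$-adic topology since $\theta,\theta'\in\hbar\,\C\lbrak\hbar\rbrak$) and verify it by a truncated Baker--Campbell--Hausdorff computation exploiting the centrality of $\gc$. First I would unpack the coproducts: because $\ga$ and $\gb$ are primitive in $\U(\mf{h}_3(\R))$,
\begin{equation*}
(\Delta\otimes\id)\big(F_\theta^{-1}\big)=\exp\{-\theta(\ga\otimes 1+1\otimes\ga)\otimes\gb\}=\big(F_\theta^{-1}\big)_{13}\big(F_\theta^{-1}\big)_{23},
\end{equation*}
where the factorisation holds because the two summands in the exponent live on disjoint legs of $\U(\mf{h}_3(\R))^{\otimes 3}$ and therefore commute. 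Dually, $(\id\otimes\Delta)(F_{\theta'}^{-1})=(F_{\theta'}^{-1})_{12}(F_{\theta'}^{-1})_{13}$. After these substitutions, (\ref{eq:gcl}) reduces to the purely combinatorial statement
\begin{equation*}
\big(F_\theta^{-1}\big)_{13}\big(F_\theta^{-1}\big)_{23}\big(F_{\theta'}^{-1}\big)_{12}=\big(F_{\theta'}^{-1}\big)_{12}\big(F_{\theta'}^{-1}\big)_{13}\big(F_\theta^{-1}\big)_{23}\,\Phi_{\theta,\theta'}.
\end{equation*}

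Next I would isolate the one non-commuting pair. Setting $X_1=\ga\otimes 1\otimes\gb$, $X_2=1\otimes\ga\otimes\gb$ and $X_3=\ga\otimes\gb\otimes 1$, a leg-by-leg computation shows that $X_1$ commutes with both $X_2$ and $X_3$, while
\begin{equation*}
[X_2,X_3]=\ga\otimes[\ga,\gb]\otimes\gb=\ga\otimes\gc\otimes\gb.
\end{equation*}
Crucially, the centrality of $\gc$ in $\U(\mf{h}_3(\R))$ forces $[X_2,X_3]$ to commute with each of $X_1,X_2,X_3$. Consequently the Baker--Campbell--Hausdorff series truncates at first order on the subalgebra generated by the $X_i$: $\exp(sA)\exp(tB)=\exp(sA+tB+\tfrac{st}{2}[A,B])$ for any two of the $X_i$ (and more generally whenever $[A,B]$ commutes with $A$ and $B$).

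Finally I would carry out the BCH bookkeeping. Using that $X_1$ commutes past the other factors and the displayed BCH identity, the left-hand side simplifies to $\exp(-\theta X_1)\exp(-\theta X_2-\theta' X_3+\tfrac12\theta\theta'[X_2,X_3])$. For the right-hand side I would first rewrite
\begin{equation*}
\Phi_{\theta,\theta'}=\exp\bigl(-(\theta-\theta')X_1+\theta\theta'[X_2,X_3]\bigr)=\exp(-(\theta-\theta')X_1)\exp(\theta\theta'[X_2,X_3]),
\end{equation*}
slide the $X_1$-factor leftward to merge with $(F_{\theta'}^{-1})_{12}=\exp(-\theta' X_1)$ into $\exp(-\theta X_1)$, and then combine $\exp(-\theta' X_3)\exp(-\theta X_2)\exp(\theta\theta'[X_2,X_3])$ by the same BCH formula. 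The $\pm\tfrac12\theta\theta'[X_2,X_3]$ terms line up so that both sides collapse to the same expression, proving the lemma. The only real obstacle is bookkeeping of signs and orderings in the BCH manipulation; no deeper input is required, and it is precisely the centrality of $\gc$—together with the fact that $[\ga,\gb]=\gc$ contributes a single commutator at order $\theta\theta'$—that forces the coassociator-type correction to take the linear-in-$\gc$ form of (\ref{eq:Phittp}).
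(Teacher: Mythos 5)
Your proof is correct: the factorizations $(\Delta\otimes\id)(F_\theta^{-1})=(F_\theta^{-1})_{13}(F_\theta^{-1})_{23}$ and $(\id\otimes\Delta)(F_{\theta'}^{-1})=(F_{\theta'}^{-1})_{12}(F_{\theta'}^{-1})_{13}$ are valid, the commutator table for $X_1$, $X_2$, $X_3$ is right (with $[X_2,X_3]=\ga\otimes\gc\otimes\gb$ central), and the truncated BCH bookkeeping makes both sides of \eqref{eq:gcl} collapse to $e^{-\theta X_1}e^{-\theta X_2-\theta'X_3+\frac12\theta\theta'[X_2,X_3]}$. This is essentially the paper's own argument — the paper instead solves \eqref{eq:gcl} for $\Phi_{\theta,\theta'}$ and simplifies the resulting product of exponentials, but the tools (primitivity of $\ga,\gb$, centrality of $\gc$, and BCH with central commutator) and the key commutator are identical.
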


\begin{proof}
From \eqref{eq:gcl}, recalling that $\ga$, $\gb$ are primitive elements
\begin{gather}
\Phi_{\theta,\theta'} =
(1\otimes F_\theta)
(\id\otimes\Delta)(F_{\theta'})(\Delta\otimes\id)\big(F_\theta^{-1}\big)\big(F_{\theta'}^{-1}\otimes 1\big) \nonumber \\
\hphantom{\Phi_{\theta,\theta'} }{}
=e^{\theta 1\otimes \ga\otimes\gb}
e^{\theta'( \ga\otimes 1\otimes\gb+\ga\otimes\gb\otimes 1 )}
e^{-\theta ( \ga\otimes 1\otimes\gb+1\otimes\ga\otimes\gb)}
e^{-\theta' \ga\otimes\gb\otimes 1} . \label{eq:3.3}
\end{gather}
Next, we use Baker--Campbell--Hausdorf\/f formula, which for two elements $X$, $Y$ of an associative algebra with central commutator $[X,Y]$ reduces to $e^{\hbar X}e^{\hbar Y}=e^{\hbar^2[X,Y]}e^{\hbar Y}e^{\hbar X}$. We get
\begin{gather*}
e^{-\theta ( \ga\otimes 1\otimes\gb+1\otimes\ga\otimes\gb)}
e^{-\theta' \ga\otimes\gb\otimes 1}=
e^{\theta\theta' \ga\otimes\gc\otimes\gb}
e^{-\theta' \ga\otimes\gb\otimes 1}
e^{-\theta ( \ga\otimes 1\otimes\gb+1\otimes\ga\otimes\gb)} .
\end{gather*}
Using this in \eqref{eq:3.3} we get
\begin{gather*}
\Phi_{\theta,\theta'} =
e^{\theta\theta' \ga\otimes\gc\otimes\gb}
e^{\theta 1\otimes \ga\otimes\gb}
 (e^{\theta'( \ga\otimes 1\otimes\gb+\ga\otimes\gb\otimes 1 )}
e^{-\theta' \ga\otimes\gb\otimes 1})
e^{-\theta ( \ga\otimes 1\otimes\gb+1\otimes\ga\otimes\gb)} \\
\hphantom{\Phi_{\theta,\theta'} }{}
=e^{\theta\theta' \ga\otimes\gc\otimes\gb}
e^{\theta 1\otimes \ga\otimes\gb}
e^{\theta' \ga\otimes 1\otimes \gb}
e^{-\theta ( \ga\otimes 1\otimes\gb+1\otimes\ga\otimes\gb)} .
\end{gather*}
Now all the exponents mutually commute, and after some simplif\/ication we arrive at~\eqref{eq:Phittp}.
\end{proof}

In particular, we can observe that the coassociator:
\begin{gather}\label{eq:coassociator}
\Phi_{\theta,\theta}=e^{\theta^2\ga \otimes \gc \otimes \gb}
\end{gather}
of the QUE algebra is not trivial nor invariant, so that we are dealing with a genuine quasi-Hopf deformation of $\U(\mf{h}_3(\R))$.
Many properties of $A^{\theta}$ can be deduced from Lemma~\ref{gcl}.

\begin{prop}\label{prop:gcl}
If $\theta'=\alpha_n(\theta)$, with $\alpha$ as in \eqref{eq:nact}, one has the generalized associativity law:
\begin{gather}\label{eq:genass}
(a\ast_{\theta'}b)\ast_\theta c=
a\ast_{\theta'}(b\ast_\theta c)
\qquad \forall\, a,c\in A\lbrak\hbar\rbrak,\quad b\in A_n\lbrak\hbar\rbrak.
\end{gather}
\end{prop}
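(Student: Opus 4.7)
The plan is to reduce the generalized associativity law \eqref{eq:genass} to Lemma~\ref{gcl}, exploiting the fact that the ``correction'' $\Phi_{\theta,\theta'}$ happens to act as the identity on $a\otimes b\otimes c$ precisely when $\theta'=\alpha_n(\theta)$ and $b\in A_n$.

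First I would unwind the definitions. Since $a\ast_\theta b=m\circ F_\theta^{-1}(a\otimes b)$ and $m\colon A\otimes A\to A$ is $\U(\mf{h}_3(\R))$-equivariant (so $h\circ m=m\circ\Delta(h)$), applying $F_\theta^{-1}$ after a multiplication brings in the coproduct. Concretely,
\begin{gather*}
(a\ast_{\theta'}b)\ast_\theta c
 = m\circ(m\otimes\id)\circ(\Delta\otimes\id)\big(F_\theta^{-1}\big)\big(F_{\theta'}^{-1}\otimes 1\big)(a\otimes b\otimes c),\\
a\ast_{\theta'}(b\ast_\theta c)
 = m\circ(\id\otimes m)\circ(\id\otimes\Delta)\big(F_{\theta'}^{-1}\big)\big(1\otimes F_\theta^{-1}\big)(a\otimes b\otimes c).
\end{gather*}
The undeformed multiplication $m$ is associative (the coassociator of $\U(\mf{h}_3(\R))$ is trivial), so $m(m\otimes\id)=m(\id\otimes m)$, and it remains to show that the two operators
$(\Delta\otimes\id)(F_\theta^{-1})(F_{\theta'}^{-1}\otimes 1)$ and $(\id\otimes\Delta)(F_{\theta'}^{-1})(1\otimes F_\theta^{-1})$ have the same image on the specific vector $a\otimes b\otimes c$.

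By Lemma~\ref{gcl} the two operators differ exactly by the right multiplication by $\Phi_{\theta,\theta'}$, so the question becomes whether $\Phi_{\theta,\theta'}$ acts trivially on $a\otimes b\otimes c$. Because $b\in A_n\lbrak\hbar\rbrak$ means $\gc.b=nb$, applying the central element $\theta-\theta'-\theta\theta'\gc$ in the middle leg of~\eqref{eq:Phittp} to $b$ yields multiplication by the scalar $\theta-\theta'-n\theta\theta'$. One now checks the elementary identity
\begin{gather*}
\theta'=\alpha_n(\theta)=\frac{\theta}{1+n\theta}\quad\Longleftrightarrow\quad \theta-\theta'-n\theta\theta'=0,
\end{gather*}
so the exponent in $\Phi_{\theta,\theta'}$ annihilates $a\otimes b\otimes c$, whence $\Phi_{\theta,\theta'}.(a\otimes b\otimes c)=a\otimes b\otimes c$. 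Combining the two displays above yields~\eqref{eq:genass}.

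The only nontrivial step is the bookkeeping in the first display: one must be careful that ``moving $F_\theta^{-1}$ through $m\otimes\id$'' is exactly what produces $(\Delta\otimes\id)(F_\theta^{-1})$, via the $\U(\mf{h}_3(\R))$-equivariance of $m$. Once this is laid out cleanly, the rest is an application of Lemma~\ref{gcl} followed by the scalar identity above; no further computation is required.
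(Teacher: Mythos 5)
Your proof is correct and follows the same route as the paper's: both reduce \eqref{eq:genass} to Lemma~\ref{gcl} and observe that, since $\gc.b=nb$, the middle leg of $\Phi_{\theta,\theta'}$ acts by the scalar $\theta-\theta'-n\theta\theta'$, which vanishes exactly when $\theta'=\alpha_n(\theta)$. The paper states this in one line; your version merely makes explicit the equivariance bookkeeping ($h\circ m=m\circ\Delta(h)$ and the associativity of the undeformed $m$) that the paper leaves implicit.
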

\begin{proof}
If $\theta'=\alpha_n(\theta)$,
from the def\/inition of the star product and the observation that $\gc.b=nb$, we deduce that $\Phi_{\theta,\theta'}$ is the identity on $a\otimes b\otimes c$ and then~\eqref{eq:genass} holds.
\end{proof}

\begin{cor}\label{cor:3.3} \quad
\begin{enumerate}\itemsep=0pt
\item[$1)$]
$A_0^{\theta}$ is an associative unital subalgebra of $A^{\theta}$.

\item[$2)$]
$E_n:=A_n\lbrak\hbar\rbrak$ is an $A_0^{\alpha_n(\theta)}$-$A_0^{\theta}$-bimodule $($with left module structure given by $\ast_{\alpha_n(\theta)}$ and right module structure given by $\ast_\theta)$.

\item[$3)$]
$ \ast_\theta\colon E_j\otimes E_k\to E_{j+k}$ descends to a map
$E_j\otimes_{\smash[t]{A_0^{\theta}}}\! E_k\to E_{j+k}$ $($where the left and right module structure are given by the $\ast_\theta$ multiplication$)$.

\item[$4)$]
For all $m,n,p\in\Z$ and
$\theta'=\alpha_n(\theta)$, the following diagram commutes:
\begin{center}
\begin{tikzpicture}
\tikzset{>=stealth, shorten >=1pt, shorten <=1pt}

\node (A) at (0,0) {$E_m\otimes_{\smash[t]{A_0^{\theta'}}}\! E_n\otimes_{\smash[t]{A_0^{\theta}}} E_p$};
\node (B) at (-1.9,-1.9) {$E_m\otimes_{\smash[t]{A_0^{\theta'}}}\! E_{n+p}$};
\node (C) at (1.9,-1.9) {$E_{m+n}\otimes_{\smash[t]{A_0^{\theta}}} E_p$};
\node (D) at (0,-3.8) {$E_{m+n+p}$};

\draw[->] (A) --node[left]{\raisebox{9pt}{\footnotesize $\id\otimes\ast_\theta$}} (B);
\draw[->] (A) --node[right]{\raisebox{9pt}{\footnotesize $\ast_{\theta'}\otimes\id$}} (C);
\draw[->] (B) --node[below left=-3pt]{{\footnotesize $\ast_{\theta'}\;$}} (D);
\draw[->] (C) --node[below right=-2pt]{{\footnotesize $\;\ast_\theta$}} (D);

\end{tikzpicture}
\end{center}
\end{enumerate}
\end{cor}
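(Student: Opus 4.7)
My plan is to derive all four parts of Corollary~\ref{cor:3.3} from Proposition~\ref{prop:gcl}, together with one preliminary observation. Since $\gc$ is central in $\mf{h}_3(\R)$, the twist $F_\theta$ commutes with $\Delta(\gc)=\gc\otimes 1+1\otimes\gc$, so the product $\ast_\theta$ preserves the $\gc$-grading, i.e.\ $A_m\ast_\theta A_n\subseteq A_{m+n}$ for all $m,n\in\Z$. Moreover $\ga,\gb$ are primitive, whence $\epsilon(\ga)=\epsilon(\gb)=0$ and therefore $F_\theta(1_A\otimes a)=1_A\otimes a=F_\theta(a\otimes 1_A)$; combined with $1_A\in A_0$, this shows that $1_A$ is a two-sided unit for every~$\ast_\theta$. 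These two facts guarantee that all arrows in the statement land in the advertised graded components.

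Part~(1) is the instance $n=0$ of Proposition~\ref{prop:gcl}: since $\alpha_0(\theta)=\theta$, the proposition reads $(a\ast_\theta b)\ast_\theta c=a\ast_\theta(b\ast_\theta c)$ whenever $b\in A_0$, i.e.\ associativity of $\ast_\theta$ on $A_0\lbrak\hbar\rbrak$. For part~(2) I would check the three bimodule axioms on $E_n$ separately. Left-action associativity (all three products $\ast_{\alpha_n(\theta)}$, with the middle element in $A_0$) and right-action associativity (all three products $\ast_\theta$, with the middle element in $A_0$) are both the $n=0$ case of Proposition~\ref{prop:gcl}, applied at parameters $\alpha_n(\theta)$ and $\theta$ respectively. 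The commuting-actions identity $(x\ast_{\alpha_n(\theta)}b)\ast_\theta y=x\ast_{\alpha_n(\theta)}(b\ast_\theta y)$ for $x,y\in A_0$ and $b\in E_n$ is Proposition~\ref{prop:gcl} read verbatim. Part~(3) is the $A_0^\theta$-balancing condition $(b\ast_\theta a)\ast_\theta c=b\ast_\theta(a\ast_\theta c)$ for $b\in E_j$, $c\in E_k$ and $a\in A_0\lbrak\hbar\rbrak$, once more the $n=0$ case of Proposition~\ref{prop:gcl}.

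For part~(4), I would first invoke part~(3) at parameter~$\theta$ and its analogue at parameter~$\theta'=\alpha_n(\theta)$ to see that both compositions in the diagram are well-defined on the balanced triple tensor product. The equality of the two legs then amounts to
\[
(e_m\ast_{\theta'}e_n)\ast_\theta e_p=e_m\ast_{\theta'}(e_n\ast_\theta e_p),
\]
which is Proposition~\ref{prop:gcl} applied with the middle factor $e_n\in A_n$ and $\theta'=\alpha_n(\theta)$. I expect no genuine obstacle here: Corollary~\ref{cor:3.3} is essentially a categorical repackaging of the generalised associativity already established in Section~\ref{sec:3}, the only non-trivial preliminary input being the grading-preservation of each~$\ast_\theta$, which rests on the centrality of~$\gc$.
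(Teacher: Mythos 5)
Your proposal is correct and follows exactly the route the paper intends: the corollary is stated without proof as an immediate consequence of Proposition~\ref{prop:gcl}, and your derivation — grading preservation from the centrality of $\gc$, unitality from $\epsilon(\ga)=\epsilon(\gb)=0$, and then repeated application of the generalized associativity law with the middle factor in $A_0$ (the $n=0$ case, at the appropriate parameter) or in $A_n$ (for the compatibility of the two actions and the commutativity of the square in part~(4)) — supplies precisely the missing details.
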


\section{Line bundles on the noncommutative torus}\label{sec:4}

Let $H_3(\R)$ be the group of upper triangular matrices
\begin{gather*}
(x,y,t):=\left[ \begin{matrix}
1 & x & t \\ 0 & 1 & y \\ 0 & 0 & 1
\end{matrix} \right] ,\qquad x,y,t\in\R.
\end{gather*}
With a slight abuse of notations, we identify elements of $\U(\mf{h}_3(\R))$ with left invariant vector f\/ields on $H_3(\R)$, that are generated by the dif\/ferential operators
\begin{gather*}
\ga:=\frac{1}{\sqrt{2\pi}}\frac{\partial}{\partial x}
,\qquad
\gb:=\frac{-i}{\sqrt{2\pi}}\left(\frac{\partial}{\partial y}+x\frac{\partial}{\partial t}\right)
,\qquad
\gc:=\frac{1}{2\pi i}\frac{\partial}{\partial t}
.
\end{gather*}
The choice of normalization will be clear later on (one can check that \eqref{eq:h3R} is satisf\/ied).

Let $H_3(\Z):=\{(x,y,t)\in H_3(\R)\colon x,y,t\in\Z\}$.
By left invariance, the above vector f\/ields descend to the
$3$-dimensional Heisenberg manifold $M_3:=H_3(\Z)\backslash H_3(\R)$.
Thinking of functions on $K\backslash G$ as left $K$-invariant functions on $G$, we get
\begin{gather*}
C^\infty(M_3)=\big\{f\in C^\infty\big(\R\times\T^2\big)\colon f(x+1,y,t+y)=f(x,y,t)\big\} ,
\end{gather*}
where $\T^2=\R^2/\Z^2$ (so $f\in C^\infty(M_3)$ is periodic with period~$1$ in~$y$ and~$t$).
The action of central elements $(0,0,t)\in H_3(\R)$ descends to a principal action of $U(1)$ on $M_3$, and $U(1)\backslash M_3\simeq\T^2$. We identify
$C^\infty(\T^2)$ with the subset of $f\in C^\infty(M_3)$ that do not depend on~$t$ (and so are periodic in both $x$ and $y$):
\begin{gather*}
C^\infty\big(\T^2\big)=\big\{f\in C^\infty(M_3)\colon f(x,y,t+t')=f(x,y,t)\; \forall\,x,y,t,t'\big\} .
\end{gather*}
Let $A:=C^\infty(M_3)$. In the notation of previous section, $A_0=
C^\infty(\T^2)$ and, for $n\neq 0$, every element $f\in A_n$ can be written in the form
\begin{gather}\label{eq:fxy}
f(x,y,t)=\sum_{k\in\Z}\widetilde{f}\big(x+\tfrac{k}{n};k\big)e^{2\pi i(ky+nt)}
\end{gather}
for a unique Schwartz function $\widetilde{f}\colon \R\times\Z/n\Z\to\C$.
The bijection $A_n\to\mc{S}(\R\times\Z/n\Z)$, $f\mapsto\widetilde{f}$, is known as Weil--Brezin--Zak transform \cite[Section~1.10]{Fol89}.
Functions~\eqref{eq:fxy} can be interpreted as smooth sections of a non-trivial smooth line bundle on~$\T^2$ \cite{DFF13}.
The algebra $A_\bullet=\bigoplus_{n\in\Z}A_n$ is dense in~$A$ (in the uniform topology); indeed, by periodicity in~$t$, the only weight spaces of~$\gc$ appearing in the decomposition of~$A$ are those with integer weight.
It is \emph{strongly} $\Z$-graded, which is the algebraic counterpart of the principality of the bundle $M_3\to\T^2$ (see, e.g.,~\cite{ADL15} or~\cite{D15}).

By point~(1) of Corollary~\ref{cor:3.3}, $A_0^\theta$ is an associative subalgebra of $A^\theta$.
By standard Fourier analysis, it is not dif\/f\/icult to verify that $A^\theta_0$ is generated by two unitary elements, the functions
\begin{gather}\label{eq:VUtorus}
V(x,y,t):=e^{2\pi ix} ,\qquad
U(x,y,t):=e^{2\pi iy} ,
\end{gather}
with relation
\begin{gather}\label{eq:VstarU}
U\ast_\theta V=e^{2\pi i\theta} V\ast_\theta U .
\end{gather}
It is then the formal analogue of the smooth algebra of the noncommutative torus.

We can extend $\C\lbrak \hbar\rbrak $-linearly the map $f\mapsto\widetilde{f}$ in \eqref{eq:fxy} to a bijective map
\begin{gather*}
E_n:=A_n\lbrak\hbar\rbrak\to\widetilde{E}_n:=\mc{S}(\R\times\Z/n\Z)\lbrak\hbar\rbrak .
\end{gather*}
For $\theta,\theta'\in\Theta$,
we can def\/ine a left action of $A^{\theta'}_0$
on $\widetilde{E}_n$ and a right action of $A^\theta_0$ on $\widetilde{E}_n$ by
\begin{gather*}
a.\widetilde{f}:=\widetilde{a\ast_{\theta'}f} ,
\qquad
\widetilde{f}.a:=\widetilde{f\ast_\theta a} ,
\end{gather*}
for all $\widetilde{f}\in\widetilde{E}_n$ and $a\in A_0$.
A computation using
\eqref{eq:fxy} gives the explicit formulas (for $n\neq 0$):
\begin{subequations}\label{eq:bimod}
\begin{align}
(U.\widetilde{f})(x;k) &=\widetilde{f}\big(x-\tfrac{1}{n};k-1\big) ,&
(\widetilde{f}.U)(x;k) &=\widetilde{f}\big(x-\tfrac{1}{n}-\theta;k-1\big) , \\
(V.\widetilde{f})(x;k) &=e^{2\pi i(x-\frac{k}{n})}e^{-2\pi in\theta'x}\widetilde{f}(x;k) , &
(\widetilde{f}.V)(x;k) &=e^{2\pi i(x-\frac{k}{n})}\widetilde{f}(x;k) ,
\end{align}
\end{subequations}
where for a smooth function~$\psi$,
by $\psi(y+\theta')$ we mean the formal power series
\begin{gather*}
\psi(y+\theta'):=e^{\theta'\partial_y}\psi(y)=\sum_{k\geq 0}\tfrac{\theta'^k}{k!}\partial_y^k\psi(y) .
\end{gather*}
(If we replace $\theta'$ by a real number, although the above 	series is convergent only for $\psi$ analytic, the formulas \eqref{eq:bimod} are well def\/ined for any Schwartz function~$\widetilde{f}$.)

By comparing these formulas with equations~(2.1)--(2.5) of \cite{Pla06} we recognize the formal analogue of Connes--Rief\/fel imprimitivity bimodule $E_g(\theta)$, in the special case
\begin{gather}\label{eq:gn}
g= \begin{pmatrix} 1 & 0 \\ n & 1 \end{pmatrix} .
\end{gather}
Finally, for all $\widetilde f_1\in \widetilde E_{n_1}$ and $\widetilde f_2\in\widetilde E_{n_2}$,
one can compute $\widetilde{f_1\ast_\theta f_2}\in\widetilde E_{n_1+n_2}$
using~\eqref{eq:fxy} and f\/ind the explicit formula
\begin{gather}\label{eq:1aa}
\widetilde{f_1\ast_\theta f_2}(x;k)=\sum_{k_1+k_2=k}\!\widetilde{f}_1\big((1-n_2\theta)x+(1+n_1\theta)\tfrac{k_1n_2-k_2n_1}{n_1(n_1+n_2)}
;k_1\big)\widetilde{f}_2\big(x
-\tfrac{k_1n_2-k_2n_1}{n_2(n_1+n_2)};k_2\big) ,\!\!\!
\end{gather}
which is valid for $n_1$, $n_2$, $n_1+n_2\neq 0$. If, on the other hand, $f_1\in E_n$ and $f_2\in E_{-n}$ ($n\neq 0$), using the identity
\begin{gather*}
\psi(x)=\sum_{m\in\Z}e^{2\pi imx}\int_0^1
e^{-2\pi imq}\psi(q)\de q ,
\end{gather*}
valid for $\psi\in\mc{S}(\R)$,
we get
\begin{gather}\label{eq:1ab}
f_1\ast_\theta f_2=
\sum_{r,s\in\Z}U^rV^s \big(\widetilde f_1\big|\widetilde f_2.V^{-s}U^{-r}\big),
\end{gather}
where
\begin{gather*}
\big(\widetilde f_1\big|\widetilde f_2\big):=\sum_{k=1}^n\int_{-\infty}^{\infty}
\widetilde f_1\big((1+n\theta)x;k\big)\widetilde f_2(x;-k)\de x .
\end{gather*}
Equations \eqref{eq:1aa} and \eqref{eq:1ab} are a formal version of \cite[Proposition~1.2(a)]{PS02}.

We can now give an interpretation to Corollary~\ref{cor:3.3}: point~(2) is the analogue of, e.g., equation~(2.6) of \cite{Pla06}, stating that the algebra of endomorphisms of a f\/initely generated projective module over a noncommutative torus is another noncommutative torus with a dif\/ferent deformation parameter; point~(4) is the associativity of the pairing of bimodules in \cite[Proposition~1.2(b)]{PS02}.

\section{Complex structures and theta functions}\label{sec:5}

In order to include a complex structure in the construction, it is convenient to start from a~dif\/ferent realization of the Heisenberg group and of the principal bundle of previous section.

Let $\tau\in\C$ be a complex number with imaginary part $\Im(\tau)>0$ and
$\Lambda:=\Z+\tau\Z\subset\C$ a lattice.
We now construct a principal $U(1)$ bundle over the elliptic curve $E_\tau:=\C/\Lambda$, isomorphic to~$\T^2$ if one forgets about the complex structure.

We parametrize the Heisenberg group as follows: we set $H_3(\R):=\C\times\R$ with multiplication
\begin{gather}\label{eq:mult}
(z_1,t_1)\cdot(z_2,t_2)=\big(z_1+z_2,t_1+t_2+\Im(\bar z_1z_2)\big),
\end{gather}
where the bar denotes complex conjugation and $\Im$ the imaginary part.
Right invariant vector f\/ields are spanned by
\begin{gather}\label{eq:rivf}
\ga=\sqrt{\frac{\Im(\tau)}{\pi}}\left(\frac{\partial}{\partial z}
+\frac{i\bar z}{2}\frac{\partial}{\partial t}
\right)
,\qquad
\gb=\sqrt{\frac{\Im(\tau)}{\pi}}
\left(\frac{\partial}{\partial\bar z}
-\frac{iz}{2}\frac{\partial}{\partial t}
\right)
,\qquad
\gc=\frac{\Im(\tau)}{\pi i}\frac{\partial}{\partial t}
,
\end{gather}
where we identify elements of $\U(\mf{h}_3(\R))$ with their representation as dif\/ferential operators.

Let $H_3(\Z)\subset H_3(\R)$ be the subgroup generated by the elements $(1,0)$ and $(\tau,0)$.

\begin{rem}
For a more explicit description, one can verify that $H_3(\Z)$
coincides with the group
$G:=\big\{g_{m,n,k}:=\big(m+n\tau,(mn+2k)\Im(\tau)\big)\colon m,n,k\in\Z\big\}$. Indeed, from \eqref{eq:mult} we get
$g_{m,n,k}g_{m',n',k'}=g_{m+m',n+n',k+k'-nm'}$, proving that $G$ is a group.
Clearly $H_3(\Z)\subset G$, since $G$ contains the generators~$(1,0)$ and $(\tau,0)$ of~$H_3(\Z)$.
On the other hand, $g_{m,n,0}=(1,0)^m(\tau,0)^n$ and
$g_{0,0,1}=(1,0)\cdot (\tau,0)\cdot (-1,0)\cdot (-\tau,0)$
belong to $H_3(\Z)$, and since $g_{m,n,k}=g_{m,n,0}(g_{0,0,1})^k$,
this proves that $H_3(\Z)\supset G$.
Note that $H_3(\Z)$ is a proper subgroup of $\Lambda\times \Im(\tau)\Z$, since for example it doesn't contain the element $(0,\Im(\tau))$, and contains properly the group $2(\Lambda\times \Im(\tau)\Z)$.
\end{rem}

We def\/ine $M_3:=H_3(\R)/H_3(\Z)$.
As in previous section,
the action of central elements $(0,t)\in H_3(\R)$ descends to a principal action of $U(1)$ on $M_3$, and $M_3/U(1)\simeq E_\tau$.
In the notations of Section~\ref{sec:QUEA}, we set
$A:=C^\infty(M_3)$ and $A_n$ as in \eqref{eq:An}, so that
$A_0\simeq C^\infty(E_\tau)$.
The right invariance of $f\in A$ under the action of $(0,2\Im(\tau))\in H_3(\Z)$ proves that $f$ is periodic in $t$ with period $2\Im(\tau)$, hence
the dif\/ferential operator $\gc$ in~\eqref{eq:rivf} has integer spectrum (this explains the choice of normalization) and the subalgebra $A_\bullet=\bigoplus_{n\in\Z}A_n$
is dense in~$A$.

The map \eqref{eq:fxy} is replaced by the bijection \eqref{eq:fxyC} below.

\begin{prop}
For all $n\neq 0$ there is a bijection $A_n\ni f\mapsto\widetilde{f}\in\mc{S}(\R\times\Z/n\Z)$ given by
\begin{gather}\label{eq:fxyC}
f(z,t)=e^{\pi in\{\frac{1}{\Im(\tau)}t+xy\}}
\sum_{k\in\Z}e^{2\pi ikx}\widetilde{f}\big(y+\tfrac{k}{n};k\big),
\end{gather}
where the real coordinates $(x,y)\in\R^2$ are defined by $z:=x+\tau y$.
\end{prop}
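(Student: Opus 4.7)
The plan is to reduce~\eqref{eq:fxyC} to standard Fourier analysis on $\R^2$ via two successive changes of variable, one in~$t$ and one in~$z$. First, the eigenvalue condition $\gc.f=nf$ combined with~\eqref{eq:rivf} is the ODE $\partial_t f=\tfrac{\pi i n}{\Im(\tau)}f$, so every $f\in A_n$ is of the form $f(z,t)=e^{\pi i n t/\Im(\tau)}g(z)$ for a unique smooth $g\colon\C\to\C$. The right-invariance of $f$ under $(0,2\Im(\tau))\in H_3(\Z)$ is then automatic, and one only needs to characterize the $g$'s compatible with right-translation by the remaining generators $(1,0)$ and $(\tau,0)$ of $H_3(\Z)$.

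Using~\eqref{eq:mult} and the identities $\Im(\bar z)=-y\Im(\tau)$, $\Im(\bar z\tau)=x\Im(\tau)$ (with $z=x+\tau y$), these two invariance conditions become the quasi-periodicities
$$g(z+1)=e^{\pi i n y}g(z),\qquad g(z+\tau)=e^{-\pi i n x}g(z).$$
The substitution $g(z)=e^{\pi i n xy}h(x,y)$ is tailored so that they simplify to
$$h(x+1,y)=h(x,y),\qquad h(x,y+1)=e^{-2\pi i n x}h(x,y).$$
Fourier-expanding the now-periodic $h$ in $x$ as $h(x,y)=\sum_{k\in\Z}a_k(y)e^{2\pi ikx}$, the second relation becomes the shift rule $a_{k+n}(y)=a_k(y+1)$. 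Setting $\widetilde f(s;k):=a_k(s-k/n)$ for $k$ in any set of representatives of $\Z/n\Z$ promotes this shift rule to the well-definedness of $\widetilde f$ as a function on $\R\times\Z/n\Z$; substituting backwards then yields exactly~\eqref{eq:fxyC}.

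The construction is manifestly invertible, so the remaining task is to match regularity classes. Smoothness of $f$ on $M_3$ is equivalent to smoothness of $h$ on $\R^2$, and by Fourier inversion this amounts to the family $\{\partial_y^\ell a_k(y)\}_{\ell\geq 0}$ decaying faster than any polynomial in $k$, locally uniformly in $y$. Iterating the shift rule as $a_k(y+j)=a_{k+nj}(y)$ converts rapid decay in $k$ (for $y$ in a fixed compact interval) into rapid decay of each $a_{k_0}(y)$ and of all its $y$-derivatives as $|y|\to\infty$, which is precisely the condition $\widetilde f(\,\cdot\,;k_0)\in\mc S(\R)$ for every residue $k_0\in\Z/n\Z$. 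This compatibility between Fourier decay in $k$ and Schwartz decay in $s$ is the main technical point of the proof; the rest is bookkeeping of transformation factors.
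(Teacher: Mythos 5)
Your proof is correct and follows essentially the same route as the paper: peel off the $t$-dependence via the eigenvalue equation, remove the cocycle with the factor $e^{\pi i n xy}$, Fourier-expand in $x$, and convert the resulting shift rule $a_k(y+1)=a_{k+n}(y)$ into periodicity in $k\bmod n$ of $\widetilde f(s;k):=a_k(s-k/n)$. The only difference is in the last step, where the paper simply cites Lemma~3.2 of the reference on modules over the noncommutative torus for the equivalence ``$f$ smooth $\Leftrightarrow$ $\widetilde f$ Schwartz,'' while you sketch that equivalence directly by trading Fourier decay in $k$ for decay in $y$ through the shift rule --- a correct and self-contained substitute.
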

\begin{proof}
This is essentially~\eqref{eq:fxy} modulo a reparametrization.
$f\in A$ if\/f it is right invariant under the action of the two
generators $(1,0)$ and $(\tau,0)$ of $H_3(\Z)$. In real coordinates,
we get the conditions
$f(x+1,y,t-y\Im(\tau))=f(x,y,t)$
and
$f(x,y+1,t+x\Im(\tau))=f(x,y,t)$.
As mentioned above, $f$ is also $2\Im(\tau)$-periodic in $t$, and
the condition $\gc f=nf$ says that every $f\in A_n$ is given by
$e^{\pi in\frac{1}{\Im(\tau)}t}$ times a function of~$x$,~$y$.
Def\/ine $F(x,y)$ implicitly by $f(x,y,z)=e^{\pi in\{\frac{1}{\Im(\tau)}t+xy\}}F(x,y)$.
The two invariance conditions above become $F(x+1,y)=F(x,y)$ and
$F(x,y+1)=F(x,y)e^{-2\pi inx}$. From the former, $F(x,y)=\sum\limits_{k\in\Z}e^{2\pi ikx}F_k(y)$ for some functions~$F_k$. The latter condition gives
$F_k(y+1)=F_{k+n}(y)$; if we def\/ine $\widetilde{f}(y;k):=F_k(y-\frac{k}{n})$,
the condition becomes $\widetilde{f}(y;k+n)=\widetilde{f}(y;k)$. Thus,
$\widetilde{f}(y;k)$ is periodic in $k$ with period~$n$.
Finally, from \cite[Lemma~3.2]{DFF13} it follows that $f$ is $C^\infty$
if\/f $\widetilde{f}$ is Schwartz.
\end{proof}

We can apply the same recipe of previous section, and deform the algebra $A$ with the twist $F_\theta=\exp\big\{\theta \ga \otimes\gb\big\}$, $\theta\in\Theta$.
The advantage is that now $A^\theta$ has, besides $A^\theta_0$, two additional associative subalgebras.
The coassociator
$\Phi_{\theta,\theta}=e^{\theta^2\ga \otimes \gc \otimes \gb}$, cf.~\eqref{eq:coassociator},
is $1$ on the kernels of $\ga$, $\gc$ and $\gb$. The second kernel is $A_0^\theta$, the f\/irst and third are related by a conjugation $z\mapsto\bar z$. We will focus on the latter.

As in previous section, $A_0^\theta$ is generated by two unitary functions
\begin{gather}\label{eq:UVtorus}
U(x,y,t):=e^{2\pi ix} ,\qquad
V(x,y,t):=e^{2\pi iy} .
\end{gather}
Since $U\ast_\theta V=e^{-\frac{\pi\theta\bar\tau}{\Im(\tau)}}UV$
and
$V\ast_\theta U=e^{-\frac{\pi\theta\tau}{\Im(\tau)}}VU$, we get the usual noncommutative torus commutation relation\footnote{The exchange
$U\leftrightarrow V$ in \eqref{eq:VUtorus} and \eqref{eq:UVtorus} is needed
to get the same commutation relation \eqref{eq:VstarU} and \eqref{eq:UstarV}.}
\begin{gather}\label{eq:UstarV}
U\ast_\theta V=e^{2\pi i\theta} V\ast_\theta U .
\end{gather}

Let $A_{\mathrm{hol}}:=\ker\gb$
and $A_{\mathrm{hol}}^\theta:=(A_{\mathrm{hol}}\lbrak\hbar\rbrak,\ast_\theta)$. Since $F_\theta$ is $1$ on $A_{\mathrm{hol}}\otimes A_{\mathrm{hol}}$, the product is undeformed: $a\ast_\theta b=ab\;\forall\;a,b\in A_{\mathrm{hol}}^\theta$ and $A_{\mathrm{hol}}^\theta$ is a \emph{commutative} associative subalgebra of $A^\theta$.

With an explicit computation we now check that elements of
$A_{\mathrm{hol}}$ are (essentially) classical theta functions on the torus.
Clearly $A_0\cap A_{\mathrm{hol}}=\C$ is the set of constant functions.
For $n\neq 0$, the set $A_n\cap A_{\mathrm{hol}}$ is described by the following lemma.

\begin{lem}
If $n<0$, $A_{\mathrm{hol}}\cap A_n=\{0\}$. If $n>0$, elements $f\in A_{\mathrm{hol}}\cap A_n$ are in bijection with elements $c_f\in\C^{\Z/n\Z}$ via the formula
\begin{gather}\label{eq:theta}
f(z,t)=e^{\pi in\{\frac{1}{\Im(\tau)}t+xy+\tau y^2\}}
\sum_{k\in\Z}c_f(k)q^{k^2/n}e^{2\pi ikz} ,
\end{gather}
where $q:=e^{\pi i\tau}$.
\end{lem}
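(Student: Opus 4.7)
My plan is to reduce the question to a classical quasi-periodicity analysis for entire functions on $\C$. The condition $f\in A_n$ fixes the $t$-dependence: $f(z,t)=e^{\pi int/\Im(\tau)}\phi(z,\bar z)$. Applying $\gb$ and eliminating $\partial_t$ via the eigenvalue equation, the condition $\gb f=0$ collapses to the first-order linear PDE $\partial_{\bar z}\phi+\frac{\pi nz}{2\Im(\tau)}\phi=0$; its general smooth solution is $\phi(z,\bar z)=e^{-\pi nz\bar z/(2\Im(\tau))}\psi(z)$ with $\psi$ holomorphic, hence entire, by the Cauchy--Riemann equations.

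Next I would translate the right $H_3(\Z)$-invariance of $f$ into conditions on $\psi$. Using \eqref{eq:mult}, the generators $(1,0)$ and $(\tau,0)$ act on $(z,t)$ as $(z,t)\mapsto(z+1,t-y\Im(\tau))$ and $(z,t)\mapsto(z+\tau,t+x\Im(\tau))$. Substituting the above form of $\phi$ and simplifying with the identities $z-\bar z=2i\Im(\tau)y$ and $\tau\bar z-\bar\tau z=2i\Im(\tau)x$, all apparent $\bar z$-terms cancel and one is left with a pair of purely holomorphic quasi-periodicities
\begin{gather*}
\psi(z+1)=e^{\frac{\pi n}{\Im(\tau)}(z+\tfrac12)}\psi(z),\qquad
\psi(z+\tau)=e^{\frac{\pi n\bar\tau z}{\Im(\tau)}+\frac{\pi n|\tau|^2}{2\Im(\tau)}}\psi(z).
\end{gather*}
Setting $\widetilde\psi(z):=e^{-\pi nz^2/(2\Im(\tau))}\psi(z)$ reduces the first relation to genuine $1$-periodicity, so $\widetilde\psi$ admits a Fourier expansion $\widetilde\psi(z)=\sum_{k\in\Z}b_k\,e^{2\pi ikz}$ converging on all of $\C$. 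Plugging this series into the $\tau$-quasi-periodicity and matching coefficients (after absorbing the factors $e^{2\pi ik\tau}=q^{2k}$) yields the recursion $b_{k+n}=q^{2k+n}b_k$ for all $k\in\Z$.

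The lemma then follows from a case analysis on the sign of $n$. For $n<0$, iterating the recursion forces $|b_k|$ to grow faster than any exponential as $|k|\to\infty$, incompatible with the existence of a convergent Laurent expansion on $\C^\times$; hence $b_k\equiv 0$ and $f\equiv 0$. For $n>0$, the ansatz $c_f(k):=b_k\,q^{-k^2/n}$ recasts the recursion as the genuine periodicity $c_f(k+n)=c_f(k)$, exhibiting the solution space as a $\C^{\Z/n\Z}$-family; the series converges because $|q|<1$ produces Gaussian decay of $|b_k|$. Unwinding $\phi=e^{-\pi nz\bar z/(2\Im(\tau))}e^{\pi nz^2/(2\Im(\tau))}\widetilde\psi(z)$ via $z^2-z\bar z=2i\Im(\tau)\,zy$ together with $zy=xy+\tau y^2$ reassembles exactly the formula \eqref{eq:theta}. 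I expect the main obstacle to be the step in which the $H_3(\Z)$-periodicities of $f$ are seen to descend to holomorphic quasi-periodicities of $\psi$: the delicate bookkeeping that eliminates every $\bar z$-contribution is forced by the specific choice of exponent in $\phi$ coming from the PDE, and organising this cleanly requires a careful handling of the real/imaginary decompositions of $\tau$.
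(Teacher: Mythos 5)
Your proof is correct, but it takes a genuinely different route from the paper's. The paper does not re-solve the holomorphicity PDE from scratch: it starts from the Weil--Brezin--Zak-type parametrization $f\mapsto\widetilde f\in\mc{S}(\R\times\Z/n\Z)$ established in the proposition immediately preceding the lemma (equation \eqref{eq:fxyC}), rewrites it in the form \eqref{eq:rewrite}, and observes that $\gb$ is proportional to $\nabla=\tau\partial_x-\partial_y+\Im(\tau)z\partial_t$, which annihilates both the prefactor $e^{\pi in\{t/\Im(\tau)+xy+\tau y^2\}}$ and the exponentials $e^{2\pi ikz}$; hence $\nabla f=0$ forces $e^{-\pi in\tau(y+k/n)^2}\widetilde f(y+\tfrac{k}{n};k)$ to be a constant $c_f(k)$, and the dichotomy in the sign of $n$ comes from the Schwartz condition, since $\widetilde f(y;k)=c_f(k)e^{\pi in\tau y^2}$ is Gaussian for $n>0$ and super-exponentially growing for $n<0$. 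You instead integrate $\gb f=0$ directly to get $\phi=e^{-\pi nz\bar z/(2\Im(\tau))}\psi(z)$ with $\psi$ entire, convert the $H_3(\Z)$-invariance into the two holomorphic quasi-periodicities of a classical theta function, and read off the answer from the Fourier--Laurent coefficients; the vanishing for $n<0$ is then the classical growth obstruction for a Laurent series convergent on all of $\C^\times$. Your intermediate formulas check out: the two quasi-periodicities, the recursion $b_{k+n}=q^{2k+n}b_k$, the substitution $c_f(k)=b_k q^{-k^2/n}$ turning it into $n$-periodicity, and the identity $z^2-z\bar z=2i\Im(\tau)zy$ reassembling \eqref{eq:theta} are all correct. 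Your argument is self-contained and makes the link with classical theta functions completely explicit, at the cost of redoing the quasi-periodicity bookkeeping; the paper's is shorter because it recycles the transform \eqref{eq:fxyC} and localizes the $n<0$ obstruction in the Schwartz class rather than in the growth of Fourier coefficients --- the two obstructions being equivalent under the transform.
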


\begin{proof}
With some algebraic manipulation we can rewrite \eqref{eq:fxyC} as follows
\begin{gather}\label{eq:rewrite}
f(z,t)=e^{\pi in\{\frac{1}{\Im(\tau)}t+xy+\tau y^2\}}
\sum_{k\in\Z}q^{k^2/n}e^{2\pi ikz}\left(e^{-\pi in\tau(y+\frac{k}{n})^2}
\widetilde{f}\big(y+\tfrac{k}{n};k\big)\right) .
\end{gather}
In real coordinates, $\gb$ is proportional to the dif\/ferential operator
$\nabla:=\tau\frac{\partial}{\partial x}-\frac{\partial}{\partial y}
+\Im(\tau)z\frac{\partial}{\partial t}$. Both the factor
$e^{\pi in\{\frac{1}{\Im(\tau)}t+xy+\tau y^2\}}$ and the holomorphic exponential $e^{2\pi ikz}$ are in the kernel of such an operator, and since
the product in parenthesis in \eqref{eq:rewrite} only depends on $y$:
\begin{gather*}
\nabla f(z,t)=-e^{\pi in\{\frac{1}{\Im(\tau)}t+xy+\tau y^2\}}
\sum_{k\in\Z}q^{k^2/n}e^{2\pi ikz}
\frac{\partial}{\partial y}\left(e^{-\pi in\tau(y+\frac{k}{n})^2}\widetilde{f}\big(y+\tfrac{k}{n};k\big)\right) .
\end{gather*}
One has $\nabla f=0$ if\/f
$e^{-\pi in\tau(y+\frac{k}{n})^2}\widetilde{f}\big(y+\tfrac{k}{n};k\big)=:c_f(k)$ does not depend on $y+\frac{k}{n}$, and in this case \eqref{eq:rewrite} reduces to
\eqref{eq:theta}. Since $\Im(\tau)>0$,
for $n>0$ the function $\widetilde f(y;k)=c_f(k)e^{\pi in\tau y^2}$ is of Schwartz class for any $c_f\in\C^{\Z/n\Z}$, while for $n<0$ it is of
Schwartz class only if it is zero.
\end{proof}

The series in \eqref{eq:theta} are the usual theta functions on $E_\tau$. If $n=1$, for example, the series in~\eqref{eq:theta} is proportional to the Jacobi's theta function
$\vartheta(z;q)=\sum\limits_{k\in\Z}q^{k^2/n}e^{2\pi ikz}$.

The algebraic structure of theta functions is encoded in the formula \eqref{eq:prodstar} below.

\begin{prop}
Let $n_1,n_2>0$.
For all $f_i\in A_{\mathrm{hol}}\cap A_{n_i}$, $i=1,2$,
\begin{gather}\label{eq:prodstar}
c_{f_1f_2}(k)=
\sum_{\substack{k_1,k_2\in\Z \\[1pt] k_1+k_2=k }}c_{f_1}(k_1)c_{f_2}(k_2)q^{\frac{1}{n_1n_2(n_1+n_2)}(k_1n_2-k_2n_1)^2} ,
\end{gather}
where $f\mapsto c_f$ is the map in \eqref{eq:theta}.
\end{prop}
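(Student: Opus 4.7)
The plan is to exploit the fact that multiplication in $A_{\mathrm{hol}}$ is undeformed, so the proof reduces to multiplying two theta-like series pointwise and comparing the result with the expansion~\eqref{eq:theta} for weight $n_1+n_2$.

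\medskip

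\noindent\textbf{Step 1 (reduction to pointwise product).} As already observed just before the lemma, $F_\theta$ acts as the identity on $A_{\mathrm{hol}}\otimes A_{\mathrm{hol}}$ because $\gb$ kills both factors; hence $f_1\ast_\theta f_2=f_1f_2$. Since $f_1f_2$ lies in $A_{n_1+n_2}\cap A_{\mathrm{hol}}$ (the ordinary product respects both the grading by $\gc$ and the kernel of the first-order operator $\gb$, as $\gb(f_1f_2)=\gb(f_1)f_2+f_1\gb(f_2)=0$), it is of the form \eqref{eq:theta} with $n=n_1+n_2$, and the coefficients on the left hand side of~\eqref{eq:prodstar} are unambiguously defined.

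\medskip

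\noindent\textbf{Step 2 (multiply and reindex).} Substituting \eqref{eq:theta} for $f_1$ and $f_2$ and using that the prefactor $e^{\pi in\{\frac{1}{\Im(\tau)}t+xy+\tau y^2\}}$ is multiplicative in $n$, one obtains
\begin{gather*}
f_1f_2(z,t)=e^{\pi i(n_1+n_2)\{\frac{1}{\Im(\tau)}t+xy+\tau y^2\}}\!\!\sum_{k_1,k_2\in\Z}\!\!c_{f_1}(k_1)c_{f_2}(k_2)q^{k_1^2/n_1+k_2^2/n_2}e^{2\pi i(k_1+k_2)z}.
\end{gather*}
The double sum is absolutely convergent thanks to $|q|=e^{-\pi\Im(\tau)}<1$, so reindexing by $k:=k_1+k_2$ is legitimate.

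\medskip

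\noindent\textbf{Step 3 (match coefficients).} Comparing with \eqref{eq:theta} applied to $f_1f_2\in A_{n_1+n_2}\cap A_{\mathrm{hol}}$ and equating the coefficients of $e^{2\pi ikz}$ for each $k\in\Z$ in the Fourier expansion along $x$, one reads off
\begin{gather*}
c_{f_1f_2}(k)=\sum_{k_1+k_2=k}c_{f_1}(k_1)c_{f_2}(k_2)\,q^{\,k_1^2/n_1+k_2^2/n_2-k^2/(n_1+n_2)}.
\end{gather*}
It remains to simplify the exponent. Putting the three fractions over the common denominator $n_1n_2(n_1+n_2)$, the numerator equals
\begin{gather*}
k_1^2n_2(n_1+n_2)+k_2^2n_1(n_1+n_2)-n_1n_2(k_1+k_2)^2=(k_1n_2-k_2n_1)^2,
\end{gather*}
which is an elementary cross-term cancellation. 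Inserting this identity gives~\eqref{eq:prodstar}.

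\medskip

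No genuine obstacle is anticipated: the only nontrivial input is the vanishing of $F_\theta-1$ on $A_{\mathrm{hol}}\otimes A_{\mathrm{hol}}$, already recorded in the text, and the rest is bookkeeping. The mildly delicate point is the algebraic identity $k_1^2/n_1+k_2^2/n_2-(k_1+k_2)^2/(n_1+n_2)=(k_1n_2-k_2n_1)^2/[n_1n_2(n_1+n_2)]$, but this is a one-line computation.
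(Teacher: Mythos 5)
Your proof is correct and follows exactly the route the paper intends: the paper's own proof is just the one-line remark that the result is ``a simple computation using \eqref{eq:theta}'' together with the identity $\frac{k_1^2}{n_1}+\frac{k_2^2}{n_2}=\frac{(k_1+k_2)^2}{n_1+n_2}+\frac{(k_1n_2-k_2n_1)^2}{n_1n_2(n_1+n_2)}$, which is precisely your Steps 2--3. Your Step 1 (undeformedness of the product on $A_{\mathrm{hol}}$ and closure of $\ker\gb$ under multiplication) is already recorded in the text just before the proposition, so nothing is missing.
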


\begin{proof}
A simple computation using \eqref{eq:theta} and the algebraic identity:
\begin{gather*}
\frac{k_1^2}{n_1}+\frac{k_2^2}{n_2}=
\frac{(k_1+k_2)^2}{n_1+n_2}+
\frac{(k_1n_2-k_2n_1)^2}{n_1n_2(n_1+n_2)}.\tag*{\qed}
\end{gather*}
 \renewcommand{\qed}{}
\end{proof}

\begin{rem}
In the $C^*$-algebraic setting,
for any f\/ixed modular parameter $\tau$, any $g$ as in \eqref{eq:gtheta} and $\theta\in\R$ solution of $g\theta=\theta$, a ring of ``quantum theta functions'' $B_g(\theta,\tau)$ can be def\/ined as a suitable ``holomorphic'' subalgebra of the tensor algebra $\bigoplus_{k\geq 0}E_g(\theta)^{\otimes k}$ (with tensor product over the algebra~$A_\theta$ of the noncommutative torus
and $E_g(\theta)^{\otimes 0}:=A_\theta$).
A product formula for quantum theta functions appeared f\/irst in~\cite{DS02} in terms of generators and relations (see also~\cite{PS02}), while an alternative formula which is closer to our notations is \cite[equation~(7.4)]{Vla06}.

For $g$ as in \eqref{eq:gn} and $n=1$, as one can easily check, the product $(c_{f_1},c_{f_2})\mapsto c_{f_1f_2}$ def\/ined by~\eqref{eq:prodstar} coincides with \cite[equation~(7.4)]{Vla06} and $B_g(\theta,\tau)\simeq A_{\mathrm{hol}}$
(while $B_g(\theta,\tau)$ is a subalgebra of~$A_{\mathrm{hol}}$ if $n>1$ in~\eqref{eq:gn}).
However, this is not surprising since, for~$g$ as in~\eqref{eq:gn}, the only solution to $g\theta=\theta$ is $\theta=0$.
The construction in this section, on the other hand, works in a formal setting and there is no constrain on~$\theta$. For any $\theta\in\Theta$, $A^\theta$ is well-def\/ined, although not associative, and has an associative (and commutative) subalgebra given by classical theta functions.
\end{rem}

\section{On higher rank vector bundles}\label{sec:6}
In this section, we describe how to derive (formal) imprimitivity bimodules associated to an arbitrary element of ${\rm SL}(2,\Z)$.

Let us f\/ix a $g$ as in \eqref{eq:gtheta}, with $d\neq 0$.
We denote by $\mathcal{M}_{c,d}\subset C^\infty(\R\times\T)$ the set of functions satisfying
\begin{gather}\label{eq:Mcd}
f(x+d,y)=e^{-2\pi icy}f(x,y) ,
\end{gather}
This is a $C^\infty(\T^2)$-right module with product given by pointwise multiplication.
One can verify that $\mathcal{M}_{c,d}$ is isomorphic to the module of (smooth) sections of a~rank~$|d|$ vector bundle on~$\T^2$ as follows.
Note that $\mathcal{M}_{c,d}\simeq\mathcal{M}_{-c,-d}$, so from now on we can assume that $d\geq 1$.

That $\mathcal{M}_{0,d}$ is a free module of rank~$d$ comes from the following observation.

\begin{rem}\label{rem:6.1}
By standard Fourier analysis, every smooth function $\varphi$ with period $d$ can be written (in a unique way) as
\begin{gather*}
\varphi(x)=\sum_{k=1}^de^{2\pi i\frac{k}{d}x}\varphi_k(x)
\end{gather*}
with $\varphi_k$ of period $1$. This gives a $C^\infty(\R/\Z)$-module isomorphism:
\begin{gather*}
C^\infty(\R/d\Z) \to C^\infty(\R/\Z)\otimes\C^d ,\qquad
\varphi\mapsto (\varphi_1,\ldots,\varphi_d) .
\end{gather*}
(Functions of period $d$ form a free module of rank $d$ over functions of period~$1$.)
\end{rem}

For the reason above, $\mathcal{M}_{0,d}\simeq \mathcal{M}_{0,1}\otimes\C^d$ as modules over $\mathcal{M}_{0,1}=C^\infty(\T^2)$.

From now on we forget about free modules and assume that $c\neq 0$ (and $d\geq 1$ as above). Note that the condition
$\det(g)=1$ in \eqref{eq:gtheta} guarantees that
$c$ and $d$ are coprime. Vice versa,
by B{\'e}zout's lemma such a $g$ exists for every coprime $c$, $d$ (although it is not unique).

\begin{lem}\label{lemma:unique}
Every $n\in\Z$ can be written, in a unique way, as $n=kc+md$ for some $1\leq k\leq d$ and $m\in\Z$.
\end{lem}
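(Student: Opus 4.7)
The plan is to reduce this to a standard consequence of B\'ezout's lemma together with division with remainder, and the main ingredient is the coprimality of $c$ and $d$ already observed in the text. There is no genuine obstacle here; the only delicate point is the prescribed range $1\leq k\leq d$ (rather than $0\leq k\leq d-1$), which is just a convention to be handled by the right choice of residue representative.

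For \emph{existence}, I would first invoke B\'ezout's lemma: since $\gcd(c,d)=1$, there are integers $u,v$ with $uc+vd=1$, hence $n=(nu)c+(nv)d$ is an initial decomposition $n=k'c+m'd$ with $k',m'\in\Z$. Next I would observe that for any $j\in\Z$ one has
\begin{equation*}
n=(k'+jd)c+(m'-jc)d,
\end{equation*}
so the integer $k$ can be shifted by arbitrary multiples of $d$, with $m$ correspondingly adjusted. By Euclidean division, there is a unique $j\in\Z$ such that $k:=k'+jd$ lies in the interval $\{1,2,\ldots,d\}$ (namely, choose $k$ to be the representative of $k'\bmod d$ in that range, taking $k=d$ when $k'\equiv 0\pmod d$); setting $m:=m'-jc$ gives the required decomposition.

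For \emph{uniqueness}, I would suppose that $n=k_1c+m_1d=k_2c+m_2d$ with $k_1,k_2\in\{1,\ldots,d\}$ and $m_1,m_2\in\Z$. Then
\begin{equation*}
(k_1-k_2)c=(m_2-m_1)d,
\end{equation*}
so $d$ divides $(k_1-k_2)c$; since $\gcd(c,d)=1$, $d$ divides $k_1-k_2$. But $|k_1-k_2|\leq d-1<d$, forcing $k_1=k_2$, and then $m_1=m_2$ since $d\geq 1$. This completes both existence and uniqueness, and the proof is essentially two lines once the coprimality of $c,d$ (guaranteed by $\det g=1$) is invoked.
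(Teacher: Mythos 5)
Your proof is correct and follows essentially the same route as the paper: Bézout coprimality for existence (the paper just uses the explicit pair $n=(-nb)c+(na)d$ coming from $ad-bc=1$ instead of invoking Bézout abstractly), the shift $(k,m)\mapsto(k+jd,m-jc)$ to land in $\{1,\dots,d\}$, and the identical divisibility argument for uniqueness. No issues.
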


\begin{proof}
The map $\Z^2\ni (k,m)\to kc+md\in\Z$ is surjective, due to the identity
$n=(-nb)c+(na)d$ following from the determinant condition in \eqref{eq:gtheta}; since $(k,m)$
and $(k-d,m+c)$ have the same image, one can always choose $1\leq k\leq d$; restricted to
$[1,\ldots,d]\times\Z$ the map is also injective,
since $kc+md=k'+md'$~-- i.e., $(k-k')c+(m-m')d=0$~-- implies that $d$ must divide $k-k'$. But
$|k-k'|\leq d-1$, so it must be $k-k'=0$,
which also implies $m-m'=0$.
\end{proof}

Remark~\ref{rem:6.1} can then be rephrase as follows.

\begin{lem}
Every $\varphi\in C^\infty(\R/d\Z)$ can be written in a unique way as
\begin{gather*}
\varphi(x)=\sum_{k=1}^de^{2\pi i\frac{c}{d}kx}\varphi_k(x)
\end{gather*}
with $\varphi_1,\ldots,\varphi_d\in C^\infty(\R/\Z)$.
\end{lem}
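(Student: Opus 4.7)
The plan is to reduce the statement to Remark~\ref{rem:6.1} by using the bijection supplied by Lemma~\ref{lemma:unique} to reindex the Fourier-type decomposition. Concretely, I would first apply Remark~\ref{rem:6.1} to obtain the unique expansion
\begin{equation*}
\varphi(x)=\sum_{k'=1}^{d}e^{2\pi i\frac{k'}{d}x}\psi_{k'}(x),
\qquad \psi_{k'}\in C^\infty(\R/\Z),
\end{equation*}
and then relabel $k'$ using the unique representation $k'=kc+md$ with $1\le k\le d$ and $m\in\Z$ provided by Lemma~\ref{lemma:unique}.

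Once this relabeling $k'\leftrightarrow (k,m)$ is fixed, I would define $\varphi_k(x):=e^{-2\pi i m x}\psi_{k'}(x)$. Since $m\in\Z$, the factor $e^{-2\pi i m x}$ is $1$-periodic, so $\varphi_k\in C^\infty(\R/\Z)$. Then
\begin{equation*}
e^{2\pi i\frac{c}{d}kx}\varphi_k(x)
=e^{2\pi i\frac{kc}{d}x}e^{-2\pi i m x}\psi_{k'}(x)
=e^{2\pi i\frac{kc+md}{d}x}e^{-2\pi i m x}\psi_{k'}(x)\cdot e^{2\pi i m x}\cdot e^{-2\pi i m x}
\end{equation*}
simplifies correctly: more cleanly, $e^{2\pi i\frac{kc}{d}x}e^{-2\pi i m x}=e^{2\pi i\frac{kc-md}{d}x}$, which equals $e^{2\pi i\frac{k'-2md}{d}x}$ if one is careless; the right way is $\frac{kc}{d}-m=\frac{kc-md}{d}$, and since $k'=kc+md$ one must instead write $kc=k'-md$, giving $e^{2\pi i\frac{kc}{d}x}e^{-2\pi i m x}=e^{2\pi i\frac{k'-md}{d}x}e^{-2\pi i m x}=e^{2\pi i\frac{k'}{d}x}e^{-2\pi i m x}e^{-2\pi i m x}\cdot e^{2\pi i m x}$; I will simply perform the arithmetic $\frac{kc}{d}-m=\frac{kc-md}{d}=\frac{k'-2md}{d}$ carefully in the final write-up (the cleanest route is to write $k'=kc+md$ so $\frac{k'}{d}=\frac{kc}{d}+m$, whence $e^{2\pi i\frac{k'}{d}x}=e^{2\pi i\frac{kc}{d}x}e^{2\pi i m x}$, and then set $\varphi_k(x):=e^{2\pi i m x}\psi_{k'}(x)$ so the two expansions agree term by term).

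For existence this yields $\varphi(x)=\sum_{k=1}^{d}e^{2\pi i\frac{c}{d}kx}\varphi_k(x)$, since summing over $k\in\{1,\ldots,d\}$ with the $m$ determined by $k'\mapsto(k,m)$ reproduces the sum over $k'\in\{1,\ldots,d\}$ in Remark~\ref{rem:6.1}. For uniqueness, any two such expansions yield, by multiplying the $k$-th term by $e^{2\pi i m x}$ with $m$ supplied by Lemma~\ref{lemma:unique}, two expansions of the form in Remark~\ref{rem:6.1}, whose coefficients must agree; inverting the bijection gives uniqueness of the $\varphi_k$.

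The only non-mechanical step is bookkeeping: one must keep track of the sign of $m$ in the substitution and verify that the map $k\mapsto k'=kc+md\pmod{d}$ really is a permutation of $\{1,\ldots,d\}$, which is precisely the content of Lemma~\ref{lemma:unique} (a consequence of $\gcd(c,d)=1$). I do not expect any serious obstacle beyond this combinatorial reindexing.
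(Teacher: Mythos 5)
Your argument is correct and essentially the paper's: the paper regroups the Fourier series $\varphi(x)=\sum_{n\in\Z}e^{2\pi i\frac{1}{d}nx}\hat\varphi_n$ directly via the bijection $n=kc+md$ of Lemma~\ref{lemma:unique}, which is the same reindexing you carry out after first packaging that series through Remark~\ref{rem:6.1}. The only caveat is the sign bookkeeping in the middle of your note; your final parenthetical version ($k'=kc+md$, hence $e^{2\pi i\frac{k'}{d}x}=e^{2\pi i\frac{c}{d}kx}e^{2\pi imx}$ and $\varphi_k:=e^{2\pi imx}\psi_{k'}$) is the correct one and matches the paper's inner sum $\varphi_k(x)=\sum_{m\in\Z}e^{2\pi imx}\hat\varphi_{kc+md}$.
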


\begin{proof}
From Lemma~\ref{lemma:unique},
we can write the Fourier series of $\varphi$ as
\begin{gather*}
\varphi(x)=
\sum_{n\in\Z}e^{2\pi i\frac{1}{d}nx}\hat\varphi_k
=\sum_{k=1}^de^{2\pi i\frac{c}{d}kx}\left(\sum_{m\in\Z}e^{2\pi imx}\hat\varphi_k\right)
\end{gather*}
and called $\varphi_k(x)=\sum\limits_{m\in\Z}e^{2\pi imx}\hat\varphi_k$ we get the desired result.
\end{proof}

\begin{prop}
$\mathcal{M}_{c,d}$ is isomorphic to the $C^\infty(\T^2)$-module of smooth sections of the vector bundle
\begin{gather}\label{eq:vbZ}
\begin{array}{@{}c}
\big(\T\times\R\times\C^d\big)/\Z
\\
\downarrow
\\
\T\times\R/\Z=\T^2,
\end{array}
\end{gather}
where the action of $\Z$ on $\T\times\R\times\C^d$ is generated by the map
\begin{gather*}
(x,y;v_1,v_2,\ldots,v_d)\mapsto
\big(x,y+1;e^{2\pi icx}v_d,v_1,\ldots,v_{d-1}\big) .
\end{gather*}
\end{prop}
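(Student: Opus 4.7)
The plan is to exhibit the isomorphism explicitly by combining a phase rescaling with the Fourier-type decomposition of the preceding lemma. First, given $f\in\mathcal{M}_{c,d}$, I would set
\[
g(x,y):=e^{2\pi i(c/d)xy}f(x,y);
\]
the twist relation \eqref{eq:Mcd} is tailored to this phase, so a direct calculation gives $g(x+d,y)=g(x,y)$, placing $g(\,\cdot\,,y)\in C^\infty(\R/d\Z)$ for every fixed $y$ (smoothly in $y$). Meanwhile, the $1$-periodicity of $f$ in $y$ produces the twisted identity $g(x,y+1)=e^{2\pi i(c/d)x}g(x,y)$.

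Next, I would apply the preceding lemma to expand
\[
g(x,y)=\sum_{k=1}^d e^{2\pi i(c/d)kx}\sigma_k(x,y)
\]
with $\sigma_k\in C^\infty(\T\times\R)$; smoothness of $\sigma_k$ in $y$ is clear from the Fourier-type integral formulas recovering the $\sigma_k$ from $g$. Substituting the expansion into $g(x,y+1)=e^{2\pi i(c/d)x}g(x,y)$ and matching coefficients via uniqueness of the decomposition, one obtains $\sigma_k(x,y+1)=\sigma_{k-1}(x,y)$ directly for $k=2,\dots,d$; the $k=d$ term produces the out-of-range exponent $e^{2\pi i(c/d)(d+1)x}=e^{2\pi icx}\cdot e^{2\pi i(c/d)x}$, whose $1$-periodic prefactor $e^{2\pi icx}$ (here one uses $c\in\Z$) is then absorbed into the $k=1$ slot, yielding $\sigma_1(x,y+1)=e^{2\pi icx}\sigma_d(x,y)$. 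These are exactly the equations identifying $(\sigma_1,\dots,\sigma_d)$ with a section of \eqref{eq:vbZ}.

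The $C^\infty(\T^2)$-linearity is essentially automatic: for $a\in C^\infty(\T^2)$ one has $a\cdot f\mapsto a\cdot g$, and since $a$ is already $1$-periodic in $x$ the decomposition above is componentwise linear over $C^\infty(\T^2)$, so $\sigma_k\mapsto a\cdot\sigma_k$. Bijectivity is witnessed by the explicit inverse
\[
f(x,y)=e^{-2\pi i(c/d)xy}\sum_{k=1}^d e^{2\pi i(c/d)kx}\sigma_k(x,y),
\]
which a direct check places in $\mathcal{M}_{c,d}$ as soon as the $\sigma_k$ satisfy the bundle's equivariance. The step requiring care is the ``wrap-around'' at $k=d$: this is precisely where the bundle's nontrivial monodromy $e^{2\pi icx}$ emerges, and matching it correctly is the only delicate point of the argument.
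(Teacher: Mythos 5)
Your argument is correct and is essentially the paper's own proof: both multiply $f$ by the phase $e^{2\pi i\frac{c}{d}xy}$ to obtain a function $d$-periodic in $x$, expand it via the preceding lemma into components $1$-periodic in $x$, and read off the cyclic gluing conditions $\sigma_k(x,y+1)=\sigma_{k-1}(x,y)$ with the monodromy $e^{2\pi icx}$ appearing in the wrap-around slot. Nothing to add.
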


\begin{proof}
Any $f\in\mathcal{M}_{c,d}$~-- since $e^{2\pi i\frac{c}{d}xy}f(x,y)$ is periodic in~$x$ with period~$d$~-- can be written (in a~unique way) in the form $f(x,y)=e^{-2\pi i\frac{c}{d}xy}\sum\limits_{k=1}^de^{2\pi i\frac{c}{d}kx}f_k(x,y)$, where $f_1,\ldots,f_d$ have period~$1$ in~$x$.
From \eqref{eq:Mcd} we get the conditions
\begin{gather*}
f_k(x,y+1)=f_{k-1}(x,y) ,\qquad\forall\, k\neq 1,
\qquad
f_1(x,y+1)=e^{2\pi icx}f_d(x,y) .
\end{gather*}
We may then think of $(f_1,\ldots,f_d)$ as a section of the vector bundle~\eqref{eq:vbZ}.
More precisely, the corresponding section is the map
\begin{gather*}
\T^2\ni [x,y]\mapsto \big[[x],y;f_1(x,y),\ldots,f_d(x,y)\big] \in \big(\T\times\R\times\C^d\big)/\Z.
\end{gather*}
The transformation $f\mapsto (f_1,\ldots,f_d)$ is the desired $C^\infty(\T^2)$-module isomorphism.
\end{proof}

Let $U$, $V$ be the generators of $C^\infty(\T^2)$ in \eqref{eq:VUtorus}, and $U'$, $V'$ the
$C^\infty(\T^2)$-linear endomorphisms of $\mathcal{M}_{c,d}$ given by
\begin{gather*}
(U'f)(x,y):=e^{2\pi iay}f(x+b,y) ,\qquad
(V'f)(x,y):=e^{2\pi i\frac{1}{d}x}f(x,y) .
\end{gather*}
(Here $a,b$ are the elements in the f\/irst row of~\eqref{eq:gtheta}.)
The operators $U'$, $V'$ are unitary if we equip~$\mathcal{M}_{c,d}$ with the inner product given by~$\inner{f_1,f_2}:=\int_{[0,d]\times [0,1]}f_1(x,y)^*f_2(x,y)\de x\hspace{1pt}\de y$,
and satisfy the def\/ining relation of the rational noncommutative torus with deformation parame\-ter~$b/d$, namely $U'V'=e^{2\pi i\frac{b}{d}}V'U'$.
Let us denote by $B_{b/d}$ the algebra of power series in $U'$, $V'$ with rapid decay coef\/f\/icients.

\looseness=-1
In order to apply the deformation machinery of Section~\ref{sec:3} we need an action of $\mf{h}_3(\R)$.
The pointwise product gives a map $\mathcal{M}_{c,d}\times\mathcal{M}_{c',d'}\to\mathcal{M}_{cd'+c'd,dd'}$. An action of $\mf{h}_3(\R)$ by derivations is
\begin{gather*}
\ga f=\frac{1}{\sqrt{2\pi}}\frac{\partial}{\partial x}f
,\qquad
\gb f=\frac{-i}{\sqrt{2\pi}}\left(\frac{\partial}{\partial y}+2\pi i\frac{c}{d}x\right)f
,\qquad
\gc f=\frac{c}{d}f
,
\end{gather*}
for all $f\in\mathcal{M}_{c,d}$.
The condition that $\gb$ is a derivation (i.e., satisf\/ies the Leibniz rule) f\/ixes the constant in front of the factor $x$, and consequently the normalization of $\gc$.
In particular $\mathcal{M}_{0,1}=C^\infty(\T^2)$ is in the kernel of~$\gc$.

A compatible action on $B_{b/d}\subset\operatorname{End}_{C^\infty(\T^2)}(\mathcal{M}_{c,d})$ is given by commutators: for all primitive $X\in\mf{h}_3(\R)$ and all $\xi\in B_{b/d}$, $X(\xi)$ is the endomorphism def\/ined by
$X(\xi)f :=X(\xi f)-\xi X(f)$ for all $f\in\mathcal{M}_{c,d}$.
With a simple computation one checks that $\gc$ is mapped to $0$, while $\ga\mapsto\frac{1}{\sqrt{2\pi}}\delta_{V'}$ and $\gb\mapsto\frac{-i}{\sqrt{2\pi}}\delta_{U'}$, where $\delta_{U'}$, $\delta_{V'}$ are the derivations def\/ined on generators by
\begin{gather*}
\delta_{U'}(U')=\tfrac{2\pi i}{d} U' ,\qquad
\delta_{U'}(V')=0 ,\qquad
\delta_{V'}(U')=0 ,\qquad
\delta_{V'}(V')=\tfrac{2\pi i}{d} V' .
\end{gather*}
We can now use the twisting element in Section~\ref{sec:3} to construct,
for any $\theta,\theta'\in\Theta$,
two new algebras $A_0^\theta:=(C^\infty(\T^2)\lbrak\nu\rbrak,\ast_\theta)$
and
$B_{b/d}^{\theta'}:=(B_{b/d}\lbrak\nu\rbrak,\ast_{\theta'})$
and to deform
$E_{c,d}:=\mathcal{M}_{c,d}\lbrak\nu\rbrak$ into a left
$B_{b/d}^{\theta'}$-module and right $A_0^\theta$-module.

Note that $A_0^\theta$ and $B_{b/d}^{\theta'}$ are both associative (they are in the kernel of~$\gc$, hence the coassociator is trivial), the former is generated by the two unitaries~$U$ and~$V$ with relation~\eqref{eq:VstarU}, the latter by the two unitaries $U'$ and $V'$ with relation
\begin{gather}\label{eq:ratdef}
U'\ast_{\theta'}V'=e^{2\pi i(d^{-2}\theta'+\frac{b}{d})}V'\ast_{\theta'}U' .
\end{gather}

The action $\alpha$ in \eqref{eq:nact} extends in an obvious way to an action of $\Q$ on $\Theta$, and from Lemma~\ref{gcl} we derive the following analogue of Proposition~\ref{prop:gcl}.

\begin{prop}
The left $B_{b/d}^{\theta'}$ action and right $A_0^\theta$ action
on $E_{c,d}$ commute, i.e.,
\begin{gather}\label{eq:genassB}
(\xi\ast_{\theta'}f_1)\ast_\theta f_2=
\xi\ast_{\theta'}(f_1\ast_\theta f_2)
\qquad \forall\,\xi\in B_{b/d}^{\theta'},\quad f_1\in E_{c,d},\quad f_2\in A_0^\theta ,
\end{gather}
if and only if $\theta'=\alpha_{\frac{c}{d}}(\theta)$.
\end{prop}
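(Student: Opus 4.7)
The plan is to adapt the proof of Proposition~\ref{prop:gcl} to the bimodule setting. In the undeformed picture, the left action of $B_{b/d}$ on $E_{c,d}$ (evaluation of endomorphisms) and the right action of $A_0$ on $E_{c,d}$ (pointwise multiplication) are jointly associative, since $B_{b/d}\subset\operatorname{End}_{C^\infty(\T^2)}(\mathcal{M}_{c,d})$; write $\mu\colon B_{b/d}\otimes E_{c,d}\otimes A_0\to E_{c,d}$ for the resulting unambiguous triple action. Both the commutator action on $B_{b/d}$ and the Leibniz action on $A_0$ were built so as to be $\U(\mf{h}_3(\R))$-equivariant, so unwinding~\eqref{eq:mf} yields
\[
(\xi\ast_{\theta'}f_1)\ast_\theta f_2 = \mu\circ(\Delta\otimes\id)\bigl(F_\theta^{-1}\bigr)\bigl(F_{\theta'}^{-1}\otimes 1\bigr)(\xi\otimes f_1\otimes f_2)
\]
and
\[
\xi\ast_{\theta'}(f_1\ast_\theta f_2) = \mu\circ(\id\otimes\Delta)\bigl(F_{\theta'}^{-1}\bigr)\bigl(1\otimes F_\theta^{-1}\bigr)(\xi\otimes f_1\otimes f_2).
\]

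By Lemma~\ref{gcl} the two precompositions differ exactly by $\Phi_{\theta,\theta'}$, so~\eqref{eq:genassB} is equivalent to the demand that $\Phi_{\theta,\theta'}$ act as the identity on every element of $B_{b/d}\otimes E_{c,d}\otimes A_0$. Since $\gc$ acts on $E_{c,d}$ as the scalar $c/d$, the middle leg of the exponent in~\eqref{eq:Phittp}, evaluated on $f_1$, collapses to the scalar $\lambda:=\theta-\theta'-\tfrac{c}{d}\theta\theta'\in\Theta$, so $\Phi_{\theta,\theta'}$ restricts to $\exp\{-\lambda\,\ga\otimes\gb\}$ on the outer factors. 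Solving $\lambda=0$ gives $\theta'=\theta/(1+\tfrac{c}{d}\theta)=\alpha_{c/d}(\theta)$, which settles the ``if'' direction.

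For the converse, suppose $\lambda\neq 0$ and test~\eqref{eq:genassB} on $\xi=V'$, $f_2=U$, and any nonzero $f_1\in E_{c,d}$. The action formulas of Section~\ref{sec:6} give $\ga(V')=\tfrac{\sqrt{2\pi}\,i}{d}V'$ and $\gb(U)=\sqrt{2\pi}\,U$, hence $\exp\{-\lambda\,\ga\otimes\gb\}$ multiplies $V'\otimes f_1\otimes U$ by the scalar $e^{-2\pi i\lambda/d}\in\C\lbrak\nu\rbrak$, which differs from $1$ because the formal exponential is injective on $\Theta\setminus\{0\}$. Since $V'\ast_{\theta'}f_1\ast_\theta U$ is nonzero for generic $f_1$, the two sides of~\eqref{eq:genassB} then differ by a nontrivial scalar and cannot agree; this forces $\theta'=\alpha_{c/d}(\theta)$.

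The main technical step I expect to require care is the setup in the first paragraph: verifying that the two different multiplications (endomorphism evaluation on the left, pointwise multiplication on the right) combine into a single $\U(\mf{h}_3(\R))$-equivariant and associative triple action, so that the Lemma~\ref{gcl} argument of Proposition~\ref{prop:gcl} transports verbatim. Once that compatibility is in hand, the rest is a direct specialization of the coassociator~\eqref{eq:Phittp} to the eigenspace $\gc=c/d$.
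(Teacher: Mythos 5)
Your proof is correct and follows essentially the same route as the paper's: reduce both sides via Lemma~\ref{gcl} to the action of $\Phi_{\theta,\theta'}$, use the fact that $\gc$ acts as $c/d$ on $E_{c,d}$ to collapse the middle leg to the scalar $\lambda=\theta-\theta'-\tfrac{c}{d}\theta\theta'$, and test the converse on $V'\otimes f_1\otimes U$. Your scalar $e^{-2\pi i\lambda/d}$ agrees with the paper's $e^{2\pi i d^{-1}(1+\frac{c}{d}\theta)\{\alpha_{c/d}(\theta)-\theta'\}}$ up to an immaterial sign, and the care you take in justifying the equivariant, associative triple action is a detail the paper leaves implicit.
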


\begin{proof}
If $\theta'=\alpha_{\frac{c}{d}}(\theta)$,
since $\gc f=\frac{c}{d}f$ for all $f\in E_{c,d}$, we deduce that $\Phi_{\theta,\theta'}$ is the identity on $\xi\otimes f_1\otimes f_2$ and then~\eqref{eq:genassB} holds. On the other hand,
\begin{gather*}
m(m\otimes\id)(\Phi_{\theta,\theta'}-1)(V'\otimes f\otimes U)
=\big(e^{2\pi id^{-1}(1+\frac{c}{d}\theta)
\{\alpha_{c/d}(\theta)-\theta'\}}-1\big)V'fU
\end{gather*}
is zero for all $f\in E_{c,d}$ only if $\theta'=\alpha_{\frac{c}{d}}(\theta)$.
\end{proof}

Note that if $\theta'=\alpha_{\frac{c}{d}}(\theta)$, the parameter in~\eqref{eq:ratdef} is
\begin{gather*}
d^{-2}\theta'+\frac{b}{d}=\frac{a\theta+b}{c\theta+d}=g\theta
\end{gather*}
as expected.

The action of the two algebras on $E_{c,d}$ can be explicitly computed on generators.
The left action of~$U'$ and the right action of $V$ are undeformed, while for the remaining two generators one gets
\begin{gather*}
(f\ast_\theta U)(x,y) =e^{2\pi iy}f(x-\theta,y) , \\
(V'\ast_{\theta'} f)(x,y) =e^{2\pi i\frac{1}{d}x}e^{-2\pi icd^{-2}\theta'x}f\big(x,y-d^{-1}\theta'\big) .
\end{gather*}
It remains to compare what we obtained with the well-known formulas that one has in the $C^*$-algebraic setting, cf., e.g., equations~(2.1)--(2.5) of~\cite{Pla06}.
This can be done by means of the transform
$E_{c,d}\to\widetilde{E}_{c,d}:=\mc{S}(\R\times\Z/c\Z)\lbrak\hbar\rbrak$, $f\mapsto\widetilde{f}$, def\/ined by
\begin{gather*}
f(x,y)=\sum_{k\in\Z}\widetilde{f}\big(x+k\tfrac{d}{c};k\big)e^{2\pi iky}
\end{gather*}
and similar to \eqref{eq:fxy} (except for $\frac{1}{n}$ replaced by $\frac{d}{c}$).
Under this transform, the left/right module structure become:
\begin{alignat*}{3}
& \widetilde{U'\!\ast_{\theta'}\! f}(x;k) =\widetilde{f}\big(x-\tfrac{1}{c};k-a\big) ,\qquad &&
\widetilde{f\!\ast_\theta\! U}(x;k) =\widetilde{f}\big(x-\tfrac{d}{c}-\theta;k-1\big) , & \\
& \widetilde{V'\!\ast_{\theta'}\! f}(x;k) =e^{2\pi i\frac{1}{d}(x-k\frac{d}{c})}e^{-2\pi icd^{-2}\theta'x}\widetilde{f}(x;k) , \qquad &&
\widetilde{f\!\ast_\theta\! V}(x;k) =e^{2\pi i(x-k\frac{d}{c})}\widetilde{f}(x;k) ,&
\end{alignat*}
for all $\widetilde{f}\in\widetilde{E}_{c,d}$. These formulas reduce to
equations~(2.1)--(2.5) of~\cite{Pla06} if $\theta'=\alpha_{\frac{c}{d}}(\theta)$ and
after replacing formal deformation parameters by irrational numbers.

\pdfbookmark[1]{References}{ref}
\LastPageEnding

\end{document}